\newcommand{\set}[1]{\left\lbrace #1 \right\rbrace}
\newcommand{\defas}{\mathrel{\mathop{:}}=}   
\newcommand{\ie}{i.e.\;}  
\newcommand{\comm}[1]{}
\newcommand{\clEA}{\overline{\mathcal{E}_A}}
\newcommand{\clEqA}{\overline{\mathcal{E}_{q,A}}}
\newcommand{\csEqA}{\cs(\overline{\mathcal{E}_{q,A}})}
\newcommand{\e}{\mathrm{e}}
\renewcommand{\subset}{\subseteq}
\newcommand{\dualspace}[1]{\left(\mathbb{R}^{#1}\right)\!{}^{*}}
\DeclareMathAlphabet\mathbit
\renewcommand{\vec}[1]{\mathbit{#1}}
\DeclareMathOperator{\supp}{supp}
\DeclareMathOperator{\sgn}{sgn}
\DeclareMathOperator{\conv}{conv} 
\DeclareMathOperator{\cs}{cs} 
\DeclareMathOperator{\Min}{Min}
\theoremstyle{definition}
\newtheorem{Def}{Definition}
\theoremstyle{remark}
\newtheorem{Rem}[Def]{Remark}
\newtheorem{Exa}[Def]{Example}
\theoremstyle{plain}
\newtheorem{Lem}[Def]{Lemma}
\newtheorem{Prop}[Def]{Proposition}
\newtheorem{Thm}[Def]{Theorem}
\newtheorem{Cor}[Def]{Corollary}
\title{Support Sets in Exponential Families and Oriented Matroid Theory}
\author{{\small \bf Johannes Rauh} \\ \small Max Planck Institute \\ \small for Mathematics in the Sciences\\ \small rauh@mis.mpg.de
\and
 {\small \bf Thomas Kahle} \\ \small Isaac Newton Institute \\  \small for Mathematical Sciences \\ \small www.thomas-kahle.de
\and
{\small \bf Nihat Ay}\\ \small Max Planck Institute \\ \small for Mathematics in the Sciences\\
\small nay@mis.mpg.de}
\begin{document}


\maketitle
\begin{abstract}
  The closure of a discrete exponential family is described by a finite set of equations
  corresponding to the circuits of an underlying oriented matroid.  These equations are similar to
  the equations used in algebraic statistics, although they need not be polynomial in the general
  case.  This description allows for a combinatorial study of the possible support sets in the
  closure of an exponential family.  If two exponential families induce the same oriented matroid,
  then their closures have the same support sets.  Furthermore, the positive cocircuits give a
  parameterization of the closure of the exponential family.

  \noindent\textbf{Keywords}: Exponential families, oriented matroids, algebraic statistics,
  polytopes, moment map, support sets

  \noindent\textbf{MSC}: 52C40, 62B05, 14P15
\end{abstract}

\section{Introduction}
In this paper we study exponential families, which are well known statistical models with many nice properties.  Let
$\mathcal{E}$ be an exponential family on a finite set $\mathcal{X}$.
We are interested in the closure $\overline{\mathcal{E}}$ (with respect to the usual topology).
In particular we are interested in the set
\begin{equation}
  \set{ \supp (p) \subset \mathcal{X} : 
    p \in \overline{\mathcal{E}}}.
\end{equation}
of all possible support sets occurring in the closure $\overline{\mathcal{E}}$.

Finding the possible support sets and the boundary $\overline{\mathcal{E}}\setminus\mathcal{E}$ of an exponential family
$\mathcal{E}$ is an important problem in statistics.
An early discussion 
of the case where $\mathcal{X}$ is finite is due to O.~Barndorff-Nielsen~\cite{Barndorff78}.  In the general case, when
$\mathcal{X}$ is infinite, there are different possible notions of closure.  This was studied by I.~Csisz\'ar and
F.~Mat\'u\v{s} in a series of papers, see~\cite{CsiszarMatus05:Closures_of_exp_fam}; earlier results are due to
N.~Chentsov~\cite{Chentsov:Statistical_decision_rules}.  The problem is related to characterizing the convex core and
convex support of $\mathcal{E}$.  In the finite case, the notions of convex core and convex support agree, and the
convex support is a polytope, called the marginal polytope in the case of hierarchical models, a particular kind of
exponential families.  Knowing the convex core is important for studying properties of the generalized maximum
likelihood estimate, see~\cite{Barndorff78,CsiszarMatus08:GMLE_for_exp_fam}.

Computing the support sets of an exponential family is equivalent to determining
the face lattice of the convex support, which can lead to hard combinatorial problems.
For example, the so-called CUT-polytopes appear naturally when studying hierarchical models
(see \cite{KahleWenzelAy08} for the relation).  In \cite{dezalaurent97} it is shown that deciding whether a given point
lies in the CUT-polytope is NP-complete.  Nevertheless, a local exploration of the face lattice is possible using the
results presented here.
For instance, one of the authors discusses support sets of small cardinality in hierarchical
models~\cite{kahle08_degree}.  In the present paper we find a concise characterization of the support sets in general
exponential families with the help of oriented matroids.  Furthermore, we show how to describe the closure of an
exponential family parametrically and implicitly.  We hope that this will allow for further theoretical results in this
direction.  For an illustration how these results can be applied see~\cite{Rauh10:Finding_Maximizers}.

Although slightly hidden, the connection to oriented matroid theory is very natural.
The starting point, and another focus of the presentation, is the implicit description of exponential families for
discrete random variables inspired by so called Markov bases \cite{geigermeeksturmfels06}. It is described in Theorem
\ref{thm:implicitization-theorem}. We study the---not necessarily polynomial---equations that define the closure of
the exponential family and relate them to the oriented matroid of the sufficient statistics of the model. In the case of
a rational valued sufficient statistics, our observations reduce to the fact that the non-negative real part of a
toric variety is described by a circuit ideal. We emphasize how the proof of this fact uses arguments from oriented
matroid theory.

The oriented matroid also plays a role when one tries to parametrize the closure of an exponential family.  The usual
exponential parametrization naturally extends to a part of the boundary if the parameters are replaced by their
logarithms.  With the help of the positive cocircuits of the oriented matroid one can construct a parametrization of
the whole closure.

This paper is organized as follows. In Section \ref{sec:expfamsec} we develop a theory of implicit representations of
exponential families which is analogue to and inspired by algebraic statistics \cite{geigermeeksturmfels06}. In contrast
to the toric case we do not require the sufficient statistics to take integer values and thereby leave the realm of
commutative algebra. What remains is the theory of oriented matroids. We discuss how answers to the support set problem
look like in the language of oriented matroids and discuss examples coming from cyclic polytopes. These polytopes are
well known in combinatorial convexity for their extremal properties, as stated, for instance, in the Upper Bound
Theorem.  In~\ref{sec:param-descr-overl} we show how to obtain a surjective parametrization of the closure of an
exponential family.  In Section \ref{sec:OrMat} we discuss the basics of the theory of oriented matroids and reformulate
statements from Section \ref{sec:expfamsec} in this language, making the connection as clear as possible.


\section{Exponential families}
\label{sec:expfamsec}

We assume a finite set $\mathcal{X} \defas \set {1,\ldots,m}$ and denote $\mathcal{P}(\mathcal{X})$ the open simplex of
probability measures with full support on $\mathcal{X}$.  The closure of any set $M \subset \mathbb{R}^{\mathcal{X}}$,
in the standard topology of $\mathbb{R}^{n}$, is denoted by $\overline{M}$.  Any vector $n\in\mathbb{R}^{\mathcal{X}}$
can be decomposed into its positive and negative part, that is $n = n^{+} - n^{-}$ via $n^{+}(x) \defas \max (n(x),0)$
and $n^{-}(x) \defas \max (-n(x), 0)$.  For any two vectors $n, p \in\mathbb{R}^{\mathcal{X}}$ we define
\begin{equation}
  p^{n} \defas \prod_{x\in\mathcal{X}} p(x)^{n(x)},
\end{equation}
whenever this product is well defined (e.g.~when $n$ and $p$ are both non-negative).  Here $0^{0}=1$ by convention.

Let $q$ be a positive measure on $\mathcal{X}$ with full support, and let $A \in \mathbb{R}^{d\times m}$ be a matrix of
width $m$.  We denote $a_{x}$, $x\in\mathcal{X}$, the columns of $A$. Then we have
\begin{Def}
  \label{def:exponential-family}
  The \emph{exponential family} associated with the reference measure $q$ and the matrix $A$ is the set of probability
  measures
  \begin{equation}
    \label{eq:expfamdefmitAmatrix}
    \mathcal{E}_{q,A} \defas \set {p_{\theta} \in \mathcal{P}(\mathcal{X}) :
      p_{\theta}(x) = \frac{q(x)}{Z_{\theta}} \exp \left( \theta^{T}a_{x}\right), \theta \in
      \mathbb{R}^{d}},
  \end{equation}
  where $Z_{\theta} \defas \sum_{x\in\mathcal{X}} q(x) \exp \left( \theta^{T}a_{x}\right)$ ensures normalization.
  If $q(x)= 1$ for all $x\in\mathcal{X}$, i.e.~if $q$ is the uniform measure on $\mathcal{X}$, then the
  corresponding exponential family is abbreviated with $\mathcal{E}_{A}$.
\end{Def}
In the following we always assume that the matrix $A$ has the vector $(1,\ldots,1)$ in its row span.  This
means that there exists a dual vector $l_{1}\in \dualspace{d}$ which satisfies $l_{1}(a_{x}) = 1$ for all
$x\in\mathcal{X}$.  There is no loss of generality in this assumption as we can always add an additional row
$(1,\ldots,1)$ to~$A$ without changing the exponential family.

\begin{Rem}
  \label{Rem:rowspan}
  Under the assumption that the row span of $A$ contains the vector $(1,\dots,1)$ the exponential family depends on $A$
  only through its row span.
  Different matrices with the same row span lead to different parametrizations of the same exponential family.  In the
  following it will be convenient to fix one parametrization, hence we work with matrices $A$ instead of vector
  spaces.
\end{Rem}

The geometrical structure of the boundary of $\clEqA$ is encoded in the polytope of possible values that the map
$A\colon \overline{\mathcal{P}}(\mathcal{X}) \to \mathbb{R}^{d}, x\mapsto Ax$ takes:
\begin{Def}
  The \emph{convex support} of $\mathcal{E}_{q,A}$ is the polytope
  \begin{equation}
    \label{eq:defconvexsupport}
    \csEqA \defas \conv \set{a_{x} : x \in \mathcal{X}}.
  \end{equation}
  If $\mathcal{E}_{q,A}$ is a hierarchical model, then
  the convex support is also called a \emph{marginal polytope}.
\end{Def}

We will see later that the faces of $\csEqA$ are in a one-to-one correspondence with the different support sets
occurring in $\clEqA$.  Even more is true: The linear mapping of $A$ is called the \emph{moment map}.  Its restriction
to $\clEqA$ defines a homeomorphism $\clEqA \cong \csEqA$ which maps every probability measure $p\in\clEqA$ into the
face corresponding to its support, see for example~\cite{Barndorff78}.

The parametrization in \eqref{eq:expfamdefmitAmatrix} does not extend to the boundary.  One might try to overcome this
problem by allowing parameters to become infinite.  More elegantly one may replace the parameters $\theta_{i}$ by their
exponentials $\xi_{i}:=e^{\theta_{i}}$, introducing the \emph{monomial parametrization}, and allowing $\xi_{i}=0$.
However, the image of this parametrization depends on the matrix $A$, and not only the row space of $A$. We will discuss
this in section~\ref{sec:param-descr-overl}.

\subsection{Implicit representations of exponential families}
\label{sec:impl-repr-expon}
The problems with parametrizations are the main motivation to move on to an implicit description of the exponential
family~$\mathcal{E}_{q,A}$.
\begin{Thm}
  \label{thm:implicitization-theorem}
  A distribution $p$ is an element of the closure of $\mathcal{E}_{q,A}$ if and only if all the equations
  \begin{equation}
    \label{impli}
    p^{n^{+}} q^{n^{-}}= p^{n^{-}} q^{n^{+}}, \qquad \text{ for all }  n \in \ker A,
  \end{equation}
  hold for $p$.
\end{Thm}
\begin{Rem}
  This theorem is a direct generalization of Theorem~3.2 in~\cite{geigermeeksturmfels06}. There only the polynomial
  equations among \eqref{impli} are studied under the additional assumption that $A$ has only integer entries.
  Moreover, only the uniform reference measure is considered. It turns out that their proof generalizes without any
  major problem, and the proof of our theorem that we present below needs one step less, since we don't need to show the
  reduction to the polynomial equations. The different flavor of the results will be made more precise in
  Remark~\ref{rem:algstat} later.

  Our proof closely follows \cite{geigermeeksturmfels06}, but we want to explicitly point out how matroid-type arguments
  are used, the first example being Lemma~\ref{lem:polytope-lemma}.
\end{Rem}

We first state a couple of auxiliary results which are of independent interest.  The matrix $A$ and derived objects are
fixed for the rest of the considerations.  A~set $C$ is a \emph{face} of a polytope $P$ if either $C = P$ or $C$ is the
intersection of the polytope with an affine hyperplane $H$, such that all $x\in P$ with $x\notin H$ lie on one side of
the hyperplane.  Faces of maximal dimension are called \emph{facets}.  It is a fundamental result that every polytope
can equivalently be described as the convex hull of a finite set or as a finite intersection of closed linear
half-spaces (corresponding to its facets), see
\cite{ziegler94}.

In particular we are interested in the face structure of $\csEqA$.  Since we assumed that all columns of $A$ lie in the
affine hyperplane $l_{1} = 1$, we can replace every affine hyperplane $H$ by an equivalent central hyperplane (which
passes through the origin).  For the convex support $\csEqA$ we want to know which points from $\set{a_{x} : x\in
  \mathcal{X}}$ lie on each face.  This motivates the following
\begin{Def}
  \label{def:facial}
  A set $F\subset\mathcal{X}$ is called \emph{facial} if there exists a vector $c \in \mathbb{R}^{d}$ such that
  \begin{equation}
    \label{eq:facial-propery}
    c^{T} a_{y} = 0 \quad \forall y \in F, \qquad\qquad
    c^{T} a_{z} \geq 1 \quad \forall z \notin F.
  \end{equation}
\end{Def}

\begin{Lem}
  \label{lem:polytope-lemma}
  Fix 
  a subset $F\subset \mathcal{X}$. Then we have:
  \begin{itemize}
  \item 
    $F$ is facial if and only if for any $u \in \ker A$:
    \begin{equation}
      \label{eq:4}
      \supp (u^{+}) \subset F
      \Leftrightarrow \supp(u^{-}) \subset F.
    \end{equation}
  \item If $p$ is a solution to \eqref{impli}, then $\supp (p)$ is facial.
  \end{itemize}
\end{Lem}
\begin{proof}
  One direction of the first statement is direct: Let $u\in\ker A$ and suppose that $\supp(u^{+})\subset F$.  Then
  $\sum_{x\in F} u(x) a_{x} = -\sum_{x\notin F}u(x)a_{x}$, so $0 = \sum_{x\in F}u(x)c^{T} a_{x} = -\sum_{x\notin F}
  u(x)(c^{T}a_{x})$.  Since $c^{T}a_{x}>1$ and $u(x)\le 0$ for $x\notin F$ it follows that $u(x) = 0$ for $x\notin F$,
  proving one direction of the first statement.

  The opposite direction is a bit more complicated.  Here, we present a proof using elementary arguments from polytope
  theory (see, e.g., \cite{ziegler94}).  For an alternative proof using Farkas' Lemma see~\cite{geigermeeksturmfels06}.
  Assume that $F$ is not facial.  Let $F'$ be the smallest facial set containing $F$.  Let $P_{F}$ and $P_{F'}$ be the
  convex hulls of $\{a_{x}:x\in F\}$ and $\{a_{x}:x\in F'\}$.  Then $P_{F}$ contains a point $q$ from the relative
  interior of $P_{F'}$.  Therefore $q$ can be represented as $q = \sum_{x\in F}\alpha(x)a_{x} = \sum_{x\in
    F'}\beta(x)a_{x}$, where $\alpha(x)\ge 0$ for $x\in F$ and $\beta(x)>0$ for $x\in F'$.  Hence
  $u(x)\defas\alpha(x)-\beta(x)$ (where $\alpha(x)\defas 0$ for $x\notin F$ and $\beta(x)\defas 0$ for $x\notin F'$)
  defines a vector $u\in\ker A$ such that $\supp(u^{+})\subseteq F$ and $\supp(u^{-})\cap(\mathcal{X}\setminus
  F)=F'\setminus F\neq\emptyset$.

   The second statement now follows immediately: If $p$ satisfies~\eqref{impli} for some $u\in\ker A$, then the left
   hand side of~\eqref{impli} vanishes if and only if the right hand side vanishes, and by the first statement this
   implies that $\supp(p)$ is facial.
\end{proof}
Now we are ready for the proof of Theorem \ref{thm:implicitization-theorem}.
\begin{proof} [Proof of Theorem \ref{thm:implicitization-theorem}]
  The first thing to note is that it is enough to prove the theorem when $q(x) = 1$ for all $x$.  To see this note that
  $p\in\clEA$ if and only if $\lambda q p \in\clEqA$, where $\lambda>0$ is a normalizing constant, which does not appear
  in equations \eqref{impli} since they are homogeneous.

  Let $Z_{A}$ be the set of solutions of \eqref{impli}.  We first show that $\mathcal{E}_{A}$ satisfies the equations
  defining $Z_{A}$. Let $p \in \mathcal{E}_{A}$, using the parametrization we can write $p(x) = \e^{\theta^{T}a_{x}}$,
  for some vector of parameters $\theta \in \mathbb{R}^{d}$ and $a_{x}$ the $x$-column of~$A$.  Then, for each $u,v\in
  \mathbb{R}^{m}$ with~$Au=Av$, we find
  \begin{equation}
    \label{eq:1}
      p^{u} = \prod_{x\in\mathcal{X}} p(x)^{u(x)} 
       = \prod_{x\in\mathcal{X}} \left(\e^{\theta^{T}a_{x}}\right)^{u(x)} 
       = \e^{\theta^{T} Au} = \e^{\theta^{T} Av} = p^{v}.
  \end{equation}
  Thus $\mathcal{E}_{A} \subset Z_{A}$, and also $\clEA \subset
  \overline{Z}_{A} = Z_{A}$.

  Next, let $p \in Z_{A} \setminus \mathcal{E}_{A}$ and put $F:=\supp(p)$.  We construct a sequence $p_{(\mu)}$ in
  $\mathcal{E}_{A}$ that converges to $p$ as $\mu\to -\infty$.  We claim that the system of equations
  \begin{equation}
    \label{eq:logprob-equations}
    b^{T}a_{x} = \log p(x) \quad \text{ for all $x\in F$}.
  \end{equation}
  in the variables $b=(b_{1},\ldots,b_{d})$ has a solution.  Otherwise we can find numbers $v(x)$, $x\in \mathcal{X}$, such
  that $\sum_{x\in F}v(x)\log p(x)\neq 0$ and $\sum_{x\in\mathcal{X}}v(x) a_{x} = 0$.  This leads to the contradiction
  $p^{v^{+}}\neq p^{v^{-}}$.

  Fix a vector $c \in \mathbb{R}^{d}$ with property \eqref{eq:facial-propery}.  For any $\mu \in \mathbb{R}$ define
  \begin{equation*}
    p_{(\mu)} \defas p_{\mu c + b} =
    \frac{1}{Z_{\mu c + b}}
    \left(
      \e^{\mu c^{T} a_{1}} \e^{b^{T}a_{1}},\ldots, \e^{\mu c^{T}a_{m}} \e^{b^{T}a_{m}}
    \right) \in \mathcal{E}_{A}.
  \end{equation*}
  By~\eqref{eq:facial-propery} and~\eqref{eq:logprob-equations} it follows that $\lim_{\mu\to -\infty} p_{(\mu)} = p$.
  This proves the theorem.
\end{proof}

We now see that the last statement of Lemma \ref{lem:polytope-lemma} can be generalized (cf.\
\cite[Lemma~A.2]{geigermeeksturmfels06}):
\begin{Prop}
  \label{prop:facialsets}
  The following are equivalent for any set $F \subset \mathcal{X}$:
  \begin{enumerate}
  \item \label{item:1} $F$ is facial.
  \item \label{item:2} The truncation of $q$ to $F$,
    \begin{equation*}
      q_{F}(x) :=
      \begin{cases}
        \frac{q(x)}{\sum_{x\in F} q(x)}, &\text{ if }x\in F,\\
        0 & \text{ else,}
      \end{cases}
    \end{equation*}
    lies in $\clEqA$.
  \item \label{item:3} There is a vector with support $F$ in $\clEqA$.
  \end{enumerate} 
\end{Prop}

According to Theorem \ref{thm:implicitization-theorem}, in order to test whether $p$ is an element of the closure of
${\mathcal E}_{q,A}$, we have to test all the equations (\ref{impli}). The next theorem shows that it is actually enough
to check finitely many equations. For this, we need the following notion from matroid theory:

\begin{Def}
  A \emph{circuit vector} of a matrix $A$ is a nonzero vector $n\in\ker A\subseteq\mathbb{R}^{m}$ with inclusion minimal
  support, i.e if $n'\in \ker A$ satisfies $\supp(n')\subseteq\supp(n)$, then $n' = \lambda n$ for some
  $\lambda\in\mathbb{R}$.  Equivalently, $n$ is the vector of coefficients of a nontrivial linear relation $\sum_{x}
  n(x)a_{x}=0$ of the columns of $A$ with inclusion minimal support.

  A \emph{circuit} is the support set of a circuit vector.  A \emph{circuit basis} is a subset of $\ker A$ containing
  precisely one circuit vector for every circuit.\footnote{It is easy to see that a circuit basis of $\ker A$ spans
    $\ker A$.  However, in general the circuit vectors are not linearly independent.}
\end{Def}
The minimality condition implies that the circuit determines its corresponding circuit vectors up to a multiple.  If we
replace $n$ by a nonzero multiple of $n$ then equation \eqref{impli} is replaced by an equation which is equivalent over
the non-negative reals.  This means that all systems of equations corresponding to any circuit basis $C$ are equivalent.
\begin{Thm}
  \label{Theorem1}
  Let $\mathcal{E}_{q,A}$ be an exponential family.  Then $\clEqA$ equals the set of all probability distributions that satisfy
  \begin{equation}
    \label{eq:Theorem1}
    p^{c^{+}} q^{c^{-}} = p^{c^{-}} q^{c^{+}}  \text{ for all } c\in C, 
  \end{equation}
  where $C$ is a circuit basis of $A$.
\end{Thm}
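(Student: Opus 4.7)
The plan is as follows. One direction is immediate: if $p\in\clEqA$ then by Theorem \ref{sec:implicitization-theorem} $p$ satisfies \eqref{impli} for every $n\in\ker A$, and in particular for every $c\in C$. The substantive direction is the converse, and for it I would reduce to showing that any $p$ satisfying \eqref{eq:Theorem1} for every $c\in C$ actually satisfies \eqref{impli} for every $n\in\ker A$, after which Theorem \ref{sec:implicitization-theorem} gives $p\in\clEqA$.

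The central tool I would invoke is the \emph{conformal circuit decomposition} from oriented matroid theory: every $n\in\ker A$ admits a finite representation $n=\sum_{i}n_{i}$ in which each $n_{i}$ is a circuit vector of $A$ conformal to $n$, meaning $\supp(n_{i}^{+})\subset\supp(n^{+})$ and $\supp(n_{i}^{-})\subset\supp(n^{-})$. Combined with the identity $n=\sum_{i}n_{i}$, these sign-containments force $n^{+}=\sum_{i}n_{i}^{+}$ and $n^{-}=\sum_{i}n_{i}^{-}$ componentwise. Each $n_{i}$ is a nonzero scalar multiple of some $c\in C$; since both sides of \eqref{eq:Theorem1} are non-negative real numbers, the equation for $c$ may be raised to any positive power, and a sign change on the scalar simply swaps the two sides. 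In either case, \eqref{impli} holds with $n$ replaced by $n_{i}$. Multiplying these identities over $i$ and exploiting $n^{\pm}=\sum_{i}n_{i}^{\pm}$ yields
\begin{equation*}
  p^{n^{+}}q^{n^{-}} = \prod_{i}p^{n_{i}^{+}}q^{n_{i}^{-}} = \prod_{i}p^{n_{i}^{-}}q^{n_{i}^{+}} = p^{n^{-}}q^{n^{+}},
\end{equation*}
which is \eqref{impli} for $n$.

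The main obstacle is the conformal decomposition theorem itself, a nontrivial fact from oriented matroid theory not contained in the excerpt but presumably developed in Section \ref{sec:OrMat}. Once that tool is available, the argument is simply a multiplicative combination of finitely many circuit equations; in particular, no separate case analysis on whether $\supp(p)$ is facial is needed, which is a pleasant contrast with the structure of Lemma \ref{lem:polytope-lemma}. A minor technical point is the handling of zeros: since $p,q\geq 0$ and all exponents appearing are non-negative, every factor is a well-defined non-negative real, and multiplication of the equations is unambiguous even when $p$ vanishes on some part of $\mathcal{X}$.
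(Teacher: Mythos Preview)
Your proposal is correct and follows essentially the same route as the paper: reduce via Theorem~\ref{sec:implicitization-theorem} to showing that the circuit equations imply all kernel equations, then invoke the conformal circuit decomposition (which is exactly Lemma~\ref{Lem:conformalcomposition}, stated immediately before the paper's proof, not deferred to Section~\ref{sec:OrMat}) to write $n^{\pm}=\sum_{i}n_{i}^{\pm}$. The only cosmetic difference is in the final step: you multiply the circuit identities directly, whereas the paper writes a telescoping difference identity and argues by induction on the number of summands; your multiplicative version is arguably the cleaner of the two, and it also lets you keep a general $q$ throughout rather than first reducing to $q\equiv 1$.
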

The proof is based on the following two well-known lemmas.  For detailed proofs
see~\cite{BjörnerLasVergnasSturmfelsWhiteZiegler93}.  For convenience we sketch the proofs:
\begin{Lem}
  \label{Lem:circuitinvector}
  For every vector $n\in\ker A$ there exists a sign-consistent circuit vector $c\in\ker A$, i.e.~if $c(x)\neq 0\neq
  n(x)$ then $\sgn c(x)=\sgn n(x)$ for all $x\in\mathcal{X}$.
\end{Lem}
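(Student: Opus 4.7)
The plan is to find the desired sign-consistent circuit vector by taking a support-minimal element of a suitable set. Define
\[
S = \set{ m \in \ker A \setminus \{0\} : m(x) \neq 0 \Rightarrow \sgn m(x) = \sgn n(x) \text{ for all } x \in \mathcal{X}},
\]
the set of nonzero kernel vectors that are sign-consistent with $n$. Since $n$ itself lies in $S$ and $\mathcal{X}$ is finite, I can pick $c \in S$ whose support has minimal cardinality. The claim is then that this $c$ is automatically a circuit vector.

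To prove the claim, I would argue by contradiction: suppose $c$ were not a circuit vector. By definition there exists $c' \in \ker A$ with $\supp(c') \subseteq \supp(c)$ such that $c'$ is not a scalar multiple of $c$. Subtracting an appropriate scalar multiple of $c$ from $c'$ to kill one fixed coordinate $x_0 \in \supp(c)$ produces a nonzero element $c'' \in \ker A$ with $\supp(c'') \subsetneq \supp(c)$.

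Now I would use $c''$ to modify $c$ into a strictly smaller element of $S$, contradicting minimality. Consider $c - \mu c''$ for $\mu \in \mathbb{R}$. Coordinates outside $\supp(c'')$ are unchanged; coordinates inside $\supp(c'')$ lie inside $\supp(c)$, so both $c(y)$ and $c''(y)$ are nonzero and the ratio $c(y)/c''(y)$ is well-defined and nonzero. Take $\mu$ to be the ratio $c(y^*)/c''(y^*)$ of smallest absolute value over $y^* \in \supp(c'')$. This choice kills the $y^*$-coordinate. For any other $y \in \supp(c'')$, either $c(y)/c''(y)$ has sign opposite to $\mu$, in which case $c(y) - \mu c''(y)$ has the same sign as $c(y)$, or it has the same sign as $\mu$ with $|c(y)/c''(y)| \ge |\mu|$, in which case $|c(y)| \ge |\mu c''(y)|$ so again $c(y) - \mu c''(y)$ keeps the sign of $c(y)$ (or vanishes). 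Thus $c - \mu c''$ is sign-consistent with $c$, hence with $n$; and it is nonzero, since $c = \mu c''$ would force $\supp(c) = \supp(c'')$, contradicting strict inclusion. So $c - \mu c'' \in S$ with support strictly smaller than $\supp(c)$, the desired contradiction.

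The only delicate step is the sign bookkeeping in the last paragraph: one must verify that choosing $\mu$ to minimize $|c(y)/c''(y)|$ simultaneously shrinks the support and preserves sign-consistency in both the same-sign and opposite-sign cases. Everything else is a straightforward application of support-minimality plus the observation that non-multiplicity of $c'$ lets one reduce to the case of strict support containment.
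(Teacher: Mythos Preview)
Your proof is correct and follows essentially the same strategy as the paper: pick a sign-consistent kernel vector of minimal support, then derive a contradiction by forming a linear combination with a kernel vector of strictly smaller support. The paper's version is terser---it directly invokes a circuit $c'$ with $\supp(c')\subsetneq\supp(c)$ and says only that ``a suitable linear combination $c+\alpha c'$'' yields the contradiction---whereas you explicitly construct the intermediate vector $c''$ and carry out the sign bookkeeping for the choice of~$\mu$, which is exactly the detail the paper suppresses.
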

\begin{proof}
  Let $c$ be a vector with inclusion-minimal support that is sign-consistent with $n$ and satisfies
  $\supp(c)\subseteq\supp(n)$.  If $c$ is not a circuit vector, then there exists a circuit vector $c'$ with
  $\supp(c')\subset\supp(c)$.  A suitable linear combination $c + \alpha c'$, $\alpha\in\mathbb{R}$ gives a
  contradiction to the minimality of $c$.
\end{proof}
\begin{Lem}
  \label{Lem:conformalcomposition}
  Every vector $n\in\ker A$ is a finite sign-consistent sum of circuit vectors $n = \sum_{i=1}^{r} c_{i}$,
  i.e.~if $c_{i}(x)\neq 0$ then $\sgn c_{i}(x)= \sgn n(x)$ for all $x\in\mathcal{X}$.
\end{Lem}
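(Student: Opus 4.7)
The plan is to prove the lemma by induction on the size of $\supp(n)$, using Lemma \ref{Lem:circuitinvector} at each step to peel off a sign-consistent circuit contribution while strictly shrinking the support.

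If $n = 0$, the empty sum suffices. Otherwise, apply Lemma \ref{Lem:circuitinvector} to produce a circuit vector $c \in \ker A$ that is sign-consistent with $n$ and has $\supp(c) \subseteq \supp(n)$. The key step is to choose the largest scalar $\lambda > 0$ such that $n - \lambda c$ remains sign-consistent with $n$; concretely, set
\begin{equation*}
  \lambda \defas \min \set{ n(x)/c(x) : x \in \supp(c) },
\end{equation*}
which is positive because $\sgn c(x) = \sgn n(x)$ on $\supp(c)$. By the definition of $\lambda$, there exists some $x_{0} \in \supp(c)$ with $(n - \lambda c)(x_{0}) = 0$, while for every other $x$ one checks directly that $n(x)$ and $(n - \lambda c)(x)$ have the same sign (or the latter is zero). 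Thus $n' \defas n - \lambda c$ lies in $\ker A$, is sign-consistent with $n$, and satisfies $\supp(n') \subsetneq \supp(n)$.

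By the induction hypothesis, $n' = \sum_{i=2}^{r} c_{i}$ is a sign-consistent sum of circuit vectors. Since $n'$ is sign-consistent with $n$, each $c_{i}$ is sign-consistent with $n$ as well. Noting that $\lambda c$ is itself a circuit vector (any nonzero scalar multiple of a circuit vector is a circuit vector, as the support is unchanged), setting $c_{1} \defas \lambda c$ yields the desired decomposition $n = c_{1} + c_{2} + \dots + c_{r}$.

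The main obstacle is the verification that subtracting $\lambda c$ both strictly shrinks the support and preserves sign-consistency simultaneously; this is exactly what the minimizing choice of $\lambda$ delivers, and it is the only delicate point in the argument. Everything else is routine induction, and the finiteness of the sum follows because $\supp(n)$ is a finite set.
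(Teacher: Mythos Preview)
Your proof is correct and follows exactly the strategy the paper indicates: induction on $|\supp(n)|$, using Lemma~\ref{Lem:circuitinvector} in the induction step to find a sign-consistent circuit that reduces the support. The paper's own proof is a one-line sketch of precisely this argument; you have simply supplied the details (the explicit choice of $\lambda$ and the verification that $n-\lambda c$ has strictly smaller support while remaining sign-consistent with $n$) that the paper leaves implicit.
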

\begin{proof}
  Use induction on the size of $\supp(n)$.  In the induction step, use a sign-consistent circuit vector, as in the last
  lemma, to reduce the support.
\end{proof}
\begin{proof}[Proof of Theorem \ref{Theorem1}]
  Again, we can assume that $q(x)=1$ for all $x\in\mathcal{X}$.
  By Theorem \ref{thm:implicitization-theorem} it suffices to show: If $p\in\mathbb{R}^{\mathcal{X}}$ satisfies
  \eqref{eq:Theorem1}, then it also satisfies $p^{n^{+}} = p^{n^{-}}$ for all $n\in\ker A$.  Write $n = \sum_{i=1}^{r}
  c_{i}$ as a sign-consistent sum of circuit vectors $c_{i}$, as in the last lemma.  Without loss of generality we can assume
  $c_{i}\in C$ for all $i$.  Then $n^{+} = \sum_{i=1}^{r} c_{i}^{+}$ and $n^{-} = \sum_{i=1}^{r} c_{i}^{-}$.  Hence $p$
  satisfies
  \begin{equation}
    p^{n^{+}} - p^{n^{-}} = p^{\sum_{i=2}^{r}c_{i}^{+}} \left(p^{c_{1}^{+}} - p^{c_{1}^{-}}\right)
                     + \left(p^{\sum_{i=2}^{r}c_{i}^{+}}- p^{\sum_{i=2}^{r}c_{i}^{-}}\right) p^{c_{1}^{-}},
  \end{equation}
  so the theorem follows easily by induction.
\end{proof}

\begin{Exa}
  \label{exa:1Dexpfam}
  Consider the following sufficient statistics:
  \begin{equation}
    A =
    \begin{pmatrix}
      1 & 1 & 1 & 1 \\
      -\alpha & 1 & 0 & 0
    \end{pmatrix},
  \end{equation}
  where $\alpha \notin \{0,1\}$ is arbitrary.  The kernel is then spanned by
  \begin{equation}
    \label{eq:spanningset}
    v_{1}= (   1 , \alpha , -1 , -\alpha )^{T} \text{ and } v_{2}= ( 1 , \alpha , -\alpha , -1 )^{T},
  \end{equation}
  but these two vectors do not form a circuit basis: They correspond to the two relations
  \begin{equation}
    \label{eq:p1p2p3p4}
    p(1)p(2)^{\alpha} = p(3) p(4)^{\alpha}
    \;\text{ and }\;
    p(1)p(2)^{\alpha} = p(3)^{\alpha} p(4).
  \end{equation}
  It follows immediately that
  \begin{equation}
    \label{eq:p3p4alpha}
    p(3)p(4)^{\alpha} = p(3)^{\alpha} p(4).
  \end{equation}
  If $p(3)p(4)$ is not zero, then we conclude $p(3)=p(4)$. However, on the boundary this does not follow from equations \eqref{eq:p1p2p3p4}:
  Possible solutions to these equations are given by
  \begin{equation}
    p_{a} = ( 0 , a , 0 , 1 - a )
    \text{  for } 0 \le a < 1.
  \end{equation}
  However, $p_{a}$ does not lie in the closure of the exponential family $\clEA$, since all members of
  $\mathcal{E}_{A}$ do satisfy $p(3)=p(4)$.

  A circuit basis of $A$ is given by the following vectors:
  \begin{subequations}
    \label{eq:Markoveqs}
    \begin{align}
      \label{eq:Markoveqs1}
      &
       ( 0 , 0 , 1 , -1 )^{T}
      & p(3)&=p(4),
      \\
      \label{eq:Markoveqs2}
      &
       ( 1 , \alpha , 0 , -1 - \alpha )^{T}
      & p(1)p(2)^{\alpha} &= p(4)^{1+\alpha},
      \\
      \label{eq:Markoveqs3}
      &
       ( 1 , \alpha , -1 - \alpha , 0 )^{T}
      & p(1)p(2)^{\alpha} &= p(3)^{1+\alpha}.
    \end{align}
  \end{subequations}
  By Theorem~\ref{Theorem1} these three equations characterize $\clEA$.
\end{Exa}

\begin{Rem}[Relation to algebraic statistics]
  \label{rem:algstat}
  In the case where the vector space $\ker A$ has a basis with integer components (for example, if $A$ is an integer
  matrix), every circuit vector is proportional to an integer circuit vector.  In this case the corresponding
  equations~\eqref{impli} are polynomial, and the theorem implies that $\clEqA$ is the non-negative real part of a
  \emph{projective variety}, i.e.~the solution set of homogeneous polynomials (see \cite{CoxLittleOShea08} for an
  introduction to commutative algebra and algebraic geometry).  If we want to use the tools of commutative algebra,
  then circuit vectors are not the right objects to consider: For example, proportional circuit vectors only yield
  equivalent equations over the non-negative reals, but we may obtain a different solution set if we allow negative real
  solutions or complex solutions.  Therefore, different integer circuit bases do not yield equivalent equations over
  $\mathbb{C}$.  This may greatly increase the running time of many algorithms of computational commutative algebra.
  One way out is to look at the \emph{ideal} $I_{c}$ generated by the binomials of all integer valued circuit vectors.
  Equivalently, $I_{c}$ corresponds to a ``prime circuit basis'' $C$ such that the components of any $n\in C$ are
  integers with greatest common divisor one.  $I_{c}$ is called the \emph{circuit ideal}, and the following discussion
  shows that the variety $V_{c}$ of $I_{c}$ equals the \emph{Zariski closure} of $\mathcal{E}_{q,A}$, i.e.~the smallest
  variety containing $\mathcal{E}_{q,A}$.

  Using $I_{c}$ is still not the best solution, since in general the circuit ideal is not \emph{radical}.  This means that there
  are polynomials that vanish on $V_{c}$ but which do not lie in $I_{c}$.  This may also increase the running time of
  algebraic algorithms.  By~\cite[Proposition 8.7]{eisenbud96:_binom_ideal} the radical of $I_{c}$ is the \emph{toric
    ideal} $I_{t}$ generated by the binomials corresponding to all $u\in\ker_{\mathbb{Z}}A$.  The fact that $I_{t}$ is
  the ideal of the Zariski closure of $\mathcal{E}_{q,A}$ was first noted in~\cite{geigermeeksturmfels06}.  It follows
  from Theorem~\ref{Theorem1}, knowing that $I_{t}$ is prime (see~\cite{eisenbud96:_binom_ideal}) and contains $I_{c}$.
  For further results on the relation between toric ideals and circuit ideals we refer to \cite{bogart07}.

  Hence, if we want to use algebraic tools, it is best to work with a \emph{Markov basis}, which can be defined as a
  finite subset of $\ker_{\mathbb{Z}}A$ such that the corresponding binomials generate $I_{t}$.  One major application
  of Markov bases makes use of the following fact, which was first noted and applied in statistics by P.~Diaconis and
  B.~Sturmfels~\cite{diaconissturmfels98}: \emph{A finite set $\mathcal{B}\subset\ker A$ is a Markov basis if and only
    if the following holds: For all $h,h'\in\mathbb{N}_{0}^{\mathcal{X}}$ such that $Ah = Ah'$ there exists a sequence
    $(b_{i})_{i=1}^{s}\subset\pm\mathcal{B}$ such that $h = h + \sum_{i=1}^{s}b_{i}$ and such that $h +
    \sum_{i=1}^{r}b_{i}\in\mathbb{N}_{0}^{\mathcal{X}}$ for all $1\le r\le s$.}
  Therefore, Markov bases elements can be used as moves to explore integer points in polytopes using a Markov Chain
  Monte Carlo algorithm.  A circuit basis is a natural generalisation, in the following sense: \emph{Let $h,h'$ be two
    non-negative vectors in $\mathbb{R}^{\mathcal{X}}$ such that $Ah = Ah'$, and let $\mathcal{C}$ be a circuit basis.
    Then there exists a sequence $(c_{i})_{i=1}^{s}\subset\mathcal{C}$ and real numbers $\alpha_{i}\in\mathbb{R}$ such
    that $h = h + \sum_{i=1}^{s}\alpha_{i}c_{i}$ and such that $h + \sum_{i=1}^{r}\alpha_{i}c_{i}$ is non-negative for
    all $1\le r\le s$} (this follows from Lemma~\ref{Lem:conformalcomposition}).  Therefore, in principle, a circuit
  basis could be used to explore polytopes using a Markov Chain Monte Carlo algorithm, drawing the elements
  $c_{i}\in\mathcal{C}$ and the coefficients $\alpha_{i}$ randomly.  Note that the converse of the statement on circuit
  bases does not hold in general.  This is related to the fact that circuit bases do not correspond to minimal systems
  of implicit equations characterizing the exponential family.  Another viewpoint is that circuit bases are more related
  to Graver bases, see~\cite{Hemmecke03:Positive_Sum_Property_and_Graver_test_sets}.

  Finding a Markov basis or a circuit basis is in general a non-trivial task.  \cite{Malkin:thesis} and
  \cite{hemmecke09:_comput} discuss algorithms for both tasks, which are implemented in the open source software package
  \textrm{4ti2}~\cite{4ti2}.  Markov basis computations tend to depend on the size of the entries of the matrix: If $A$
  has only small entries, then one may hope that there are ``enough'' vectors in $\ker_{\mathbb{Z}}A$ with small
  entries, corresponding to polynomials of low degree.  The Markov bases algorithm is related to Buchberger's algorithm,
  whose speed depends on the degrees of the starting polynomials.  Circuit computations do not depend essentially on the
  size of the entries of $A$, but the number of circuits tends to be much larger than the number of Markov basis
  elements. 
  In our experience Markov basis computations are faster when $A$ has only ``small'' entries (which is the most
  important case for applications), and circuit computations are faster when $A$ has ``large'' entries.
\end{Rem}

\begin{Exa}
  \label{exa:Markov-non-circuit}
  For an example of a Markov basis which contains noncircuits consider the matrix
    \begin{equation}
      \label{eq:non-circuit-markov-move-A-matrix}
    A =
    \begin{pmatrix}
      1 & 1 & 1 & 1 \\
      0 & 1 & 2 & 3 
    \end{pmatrix}.
  \end{equation}
  A quick calculation with the software \textrm{4ti2} gives the following circuit basis of $A$:
  \begin{equation}
    \begin{aligned}
      \label{eq:circuits-in-example}
      u_{1} := (0,  1, -2,  1),\qquad & u_{2} := (1, -2,  1,  0), \\ 
      u_{3} := (1,  0, -3,  2),\qquad & u_{4} := (2, -3,  0,  1).
    \end{aligned}
  \end{equation}
  However, any Markov basis contains $v := (1,-1,-1,1)$, which is obviously not a circuit vector.  $v$ corresponds to
  the binomial $p_{1}p_{4}-p_{2}p_{3}$.  The computation
  \begin{equation*}
    (p_{1}p_{4}-p_{2}p_{3})^{2} 
    = p_{4} (p_{1}^{2} p_{4} - p_{2}^{3}) + p_{2}^{2}(p_{3}^{2} - p_{2}p_{4})
    - 2 p_{2}p_{4} (p_{1}p_{3} - p_{2}^{2})
  \end{equation*}
  illustrates that the radical of the circuit ideal is the toric ideal, i.e.~any solution of the toric ideal also solves
  $p_{1}p_{4}-p_{2}p_{3}=0$.


  It is not easy to find a \emph{hierarchical} model whose Markov basis does not consist of circuit vectors.  In
  \cite{aoki03:_markov}, S.~Aoki and A.~Takemura give an example.  Interestingly, neither the full Markov basis nor a
  circuit basis of this model are known.
 \end{Exa}

 \begin{Rem}
  \label{rem:enumcircuits}
  Using arguments from matroid theory the number of circuits can be shown to be less than or equal to $\binom{m}{r+2}$,
  where $m=|\mathcal{X}|$ is the size of the state space and $r$ is the dimension of
  $\mathcal{E}_{q,A}$, see \cite{DosaSzalkaiLaflamme04}.  This gives an upper bound on the number of implicit equations
  describing $\clEqA$.  Note that $\binom{m}{r+2}$ is usually much larger than the codimension $m - r - 1$ of
  $\mathcal{E}_{q,A}$ in the probability simplex.  In contrast, if we only want to find an implicit description of all
  probability distributions of $\mathcal{E}_{q,A}$, which have full support, then $m - r - 1$ equations are enough: We
  can test $p\in\mathcal{E}_{q,A}$ by checking whether $\log(p/q)$ lies in the row span of $A$.  This amounts to
  checking whether $\log(p/q)$ is orthogonal to $\ker A$, which is equivalent to $m - r - 1$ equations after choosing a
  basis of $\ker A$.

  It turns out that even in the boundary the number of equations can be further reduced: In general we do not need all
  circuits for the implicit description of $\clEqA$.  For instance, in Example~\ref{exa:1Dexpfam}, the
  equations~\eqref{eq:Markoveqs2} and~\eqref{eq:Markoveqs3} are equivalent given~\eqref{eq:Markoveqs1}, i.e.~we only
  need two of the three circuits to describe $\clEqA$.  Unfortunately we do not know how to find a minimal subset of
  circuits that characterizes the closure of the exponential family.  In the algebraic case discussed in the
  previous remark this question is equivalent to determining a minimal generating set of the circuit ideal among the
  circuit vectors.
\end{Rem}

\subsection{Support sets of exponential families}
\label{sec:supp-sets-expon}
Now we focus on the following problem: Given a set $S \subseteq {\mathcal X}$, is there a probability distribution $p
\in \clEqA$ satisfying $\supp(p) = S$?
In other words, we want to characterize the set
\begin{equation}
  \mathcal{S}_{q,A} := \set{ \supp(p) : p\in\clEqA } \subseteq 2^{\mathcal{X}}.
\end{equation}

Proposition~\ref{prop:facialsets} and Lemma~\ref{lem:polytope-lemma} give the following characterization: A nonempty set
$S\subset\mathcal{X}$ is the support set of some distribution $p\in\clEqA$ if and only if the following holds for all
circuit vectors $n \in \ker A$:
\begin{itemize}
\item  $\supp(n^+) \subset S$ if and only if $\supp(n^-) \subseteq S$.
\end{itemize}
Obviously, this condition does not depend on the circuits themselves, but only on the
supports of their positive and negative part.

\begin{Def}
  A \emph{signed subset} $(M,N)$ of~$\mathcal{X}$ is a pair of disjoint subsets $M,N\subseteq\mathcal{X}$.
  Alternatively, a signed subset~$(M,N)$ can be represented as a sign vector $X\in\{-1,0,+1\}^{\mathcal{X}}$, where
  \begin{equation}
    X(x) =
    \begin{cases}
      +1, &\text{ if } x\in M,\\
      -1, &\text{ if } x\in N,\\
      0, &\text{ else.}\\
    \end{cases}
  \end{equation}
  As a slight abuse of notation, we don't make a difference between these two representations in the following.

  Consider the map
  \begin{align*}
    \sgn\colon n \mapsto (\supp(n^{+}),\supp(n^{-})),
  \end{align*}
  which associates to each vector a signed subset of~$\mathcal{X}$.
  In the sign vector representation, $\sgn$ corresponds to the usual signum mapping applied componentwise to vectors.
  The signed subset $\sgn(c)$ corresponding to a circuit vector $c\in\ker A$ shall be called a \emph{signed circuit}.
  The set of all signed circuits is denoted by
  \begin{equation}
    \mathcal{C}(A) := \pm \sgn(C) = \{ \sgn(c): c \in C \text{ or } c\in -C\},
  \end{equation}
  where $C$ is a circuit basis of $A$.
\end{Def}
Note that the set of signed circuits is twice as large as a circuit basis, so the set of signed circuits carries a lot
of redundant information, which should be removed when doing calculations with $\mathcal{C}(A)$.  However, for
theoretical purposes it is advantageous to work with the symmetric set $\mathcal{C}(A)$.

We immediately have the following
\begin{Thm}
  \label{test}
  Let $S$ be a nonempty subset of $\mathcal{X}$. Then $S \in \mathcal{S}_{q,A}$ if and only if the following holds for
  all signed circuits $(M,N) \in \mathcal{C}(A)$:
  \begin{equation}
    \label{cond}
    M \subseteq S \quad \Leftrightarrow \quad N \subseteq S.
  \end{equation}
\end{Thm}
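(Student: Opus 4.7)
The plan is to combine Proposition \ref{prop:facialsets} with Lemma \ref{lem:polytope-lemma} and reduce the condition over all kernel vectors to a condition over only the circuits, using the sign-consistent decomposition from Lemma \ref{Lem:conformalcomposition}.

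By Proposition \ref{prop:facialsets}, a nonempty set $S$ lies in $\mathcal{S}$ if and only if $S$ is facial, and by Lemma \ref{lem:polytope-lemma}, $S$ is facial if and only if
\begin{equation*}
  \supp(u^{+}) \subset S \iff \supp(u^{-}) \subset S \qquad \text{for all } u \in \ker A.
\end{equation*}
So it suffices to prove that this equivalence for all $u \in \ker A$ is equivalent to the same equivalence restricted to the circuit vectors. One direction is trivial, since every circuit vector is in $\ker A$; note also that passing to the opposite sign $(B,A)$ of a signed circuit does not add information, so quantifying over $\mathcal{C}(A)$ is the same as quantifying over one representative per circuit.

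For the nontrivial direction, suppose the equivalence in \eqref{cond} holds for every signed circuit, and fix an arbitrary $u \in \ker A$. By Lemma \ref{Lem:conformalcomposition}, write $u = \sum_{i=1}^{r} c_{i}$ as a sign-consistent sum of circuit vectors. Sign-consistency guarantees
\begin{equation*}
  \supp(u^{+}) = \bigcup_{i=1}^{r} \supp(c_{i}^{+}), \qquad \supp(u^{-}) = \bigcup_{i=1}^{r} \supp(c_{i}^{-}).
\end{equation*}
Assuming $\supp(u^{+}) \subset S$, every $\supp(c_{i}^{+})$ is contained in $S$, so the circuit condition applied to each $c_{i}$ yields $\supp(c_{i}^{-}) \subset S$, and taking the union gives $\supp(u^{-}) \subset S$. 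The converse implication is symmetric, which completes the reduction.

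The only subtle step is the sign-consistent decomposition of an arbitrary kernel vector into circuits, but this has already been established in Lemma \ref{Lem:conformalcomposition}; once it is invoked, the rest is a bookkeeping argument on supports. Hence the proof is short, and the main conceptual content lies in translating the facial condition into the language of oriented circuits, which is precisely what Theorem \ref{test} formalizes.
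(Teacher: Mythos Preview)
Your argument is correct and follows the same route as the paper: the paper derives the characterization directly from Proposition~\ref{prop:facialsets} (support sets are exactly the facial sets) together with Lemma~\ref{lem:polytope-lemma}, and then states Theorem~\ref{test} as immediate once one passes to signed circuits; your proof simply makes explicit the reduction from all $u\in\ker A$ to circuit vectors via the sign-consistent decomposition of Lemma~\ref{Lem:conformalcomposition}, which is precisely the step the paper leaves implicit.
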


\begin{Cor}
  \label{Cor:OrMatdeterminessupports}
  If two matrices $A_{1}$, $A_{2}$ satisfy $\mathcal{C}(A_{1}) = \mathcal{C}(A_2)$ then the possible support sets of the
  corresponding exponential families ${\mathcal E}_{q_{1},A_{1}}$ and ${\mathcal E}_{q_{2},A_{2}}$ coincide.
\end{Cor}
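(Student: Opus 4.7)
The plan is to invoke Theorem \ref{test} twice and observe that the characterization of support sets it provides depends on the pair $(q,A)$ only through the combinatorial datum $\mathcal{C}(A)$. More precisely, Theorem \ref{test} asserts that a nonempty subset $S\subset\mathcal{X}$ occurs as the support of some $p\in\clEqA$ if and only if for every signed circuit $(A',B)\in\mathcal{C}(A)$ the equivalence $A'\subset S \Leftrightarrow B\subset S$ holds. The reference measure $q$ does not appear on the right-hand side of this characterization at all, and the matrix $A$ appears only via the set of oriented circuits $\mathcal{C}(A)$.

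Given this, the corollary reduces to a one-line deduction. Under the hypothesis $\mathcal{C}(A_{1})=\mathcal{C}(A_{2})$, the two conditions on $S$ that characterize membership in $\mathcal{S}(q_{1},A_{1})$ and in $\mathcal{S}(q_{2},A_{2})$ via Theorem \ref{test} are literally the same condition. Hence for every nonempty $S\subset\mathcal{X}$ we have $S\in\mathcal{S}(q_{1},A_{1})$ if and only if $S\in\mathcal{S}(q_{2},A_{2})$. Supports of probability measures are automatically nonempty, so the empty set plays no role on either side, and the two sets of possible supports coincide.

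There is no substantial obstacle here; this corollary is precisely the reason the oriented-matroid language was set up in the previous theorem. The only minor bookkeeping point is to keep the two matrices $A_{1},A_{2}$ disjoint from the abuse-of-notation use of the letter $A$ in Theorem \ref{test} for the positive half of a signed subset $(A,B)$. Once the notation is disambiguated as above, the argument is entirely mechanical.
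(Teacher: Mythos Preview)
Your proposal is correct and matches the paper's treatment: the corollary is stated there without proof as an immediate consequence of Theorem~\ref{test}, precisely because the characterization in that theorem depends on $(q,A)$ only through $\mathcal{C}(A)$. Your remark about disambiguating the notation for the signed subset $(A,B)$ from the matrices $A_1,A_2$ is a sensible clarification.
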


According to Remark \ref{rem:enumcircuits}, Theorem \ref{test} gives up to $\binom{m}{r+2}$ conditions on the
support.  Usually, some of these conditions are redundant, but it is not easy to see a priori, which conditions are
essential. %
Of course, a necessary condition for a subset $S$ of ${\mathcal X}$ to be a support set of a distribution contained in
$\clEA$ is condition \eqref{cond} restricted to pairs from a subset $\mathcal{H}\subset \mathcal{C}(A)$.  For example,
one can take ${\mathcal H} := \sgn(B)$, where $B$ is a finite subset of $\ker A$, such as a basis.

\begin{Exa}
  \label{exa:restbasis}
  We continue Example \ref{exa:1Dexpfam}.  If $\alpha>0$ then we deduce the following implications from the circuits:
  \begin{subequations}
    \begin{align}
      p(3)\neq 0 &\quad\Longleftrightarrow\quad p(4)\neq 0,
      \\
      p(1)\neq 0 \;\text{ and }\; p(2)\neq 0 &\quad\Longleftrightarrow\quad p(4)\neq 0,
      \\
      p(1)\neq 0 \;\text{ and }\; p(2)\neq 0 &\quad\Longleftrightarrow\quad p(3)\neq 0.
    \end{align}
  \end{subequations}
  Again, as above, the last two implications are equivalent given the first.

  From this it follows easily that the possible support sets in this example are $\{1\}$, $\{2\}$ and~$\{1,2,3,4\}$.
  From the spanning set \eqref{eq:spanningset} we only obtain the implication
  \begin{equation}
      p(1)\neq 0 \;\text{ and }\; p(2)\neq 0 \quad\Longleftrightarrow\quad p(3)\neq 0 \;\text{ and }\; p(4)\neq 0.
  \end{equation}
\end{Exa}

We conclude this section with two examples where a complete characterization of the face lattice of the convex support
and thus of the possible supports is easily achievable.
\begin{Exa}[Supports in the binary no-$n$-way interaction model]
  \label{sec:cyclic-polytope-example}
  Consider the binary hierarchical model \cite{KahleWenzelAy08} whose simplicial complex is the boundary of an $n$
  simplex. If $n=3$, this model is called the no-$3$-way interaction model, and its Markov bases have been recognized to
  be arbitrarily complicated \cite{loera06:_markov_bases_of_three_way}, so we do not expect to find an easy description
  of the signed circuits.  However, if we restrict ourselves to binary variables
  $x=(x_{i})_{i=1}^{n}\in\mathcal{X}\defas\set{0,1}^{n}$, the structure is very simple. In this case the exponential
  family is of dimension $2^{n}-2$, \ie of codimension 1 in the simplex, so $\ker A$ is one dimensional. It is spanned
  by the ``parity function''
  \begin{equation}
    \label{eq:n-1polytope-dep}
    e_{[n]}(x) \defas
    \begin{cases}
      -1 & \text{ if } \sum_{i=1}^{n} x_{i} \text{ is odd, } \\
      1 & \text {otherwise.}
    \end{cases}
  \end{equation}
  Using Theorem \ref{test} we can easily describe the face lattice of the marginal polytope (i.e.~convex
  support)~$P^{(n-1)}$: A set $\mathcal{Y}\subsetneq\set{0,1}^{n}$ is a support set if and only if it does not contain
  all configurations with even parity, or all configurations with odd parity. It follows that $P^{(n-1)}$ is
  \emph{neighborly}, \ie
  the convex hull of any $\lfloor\frac{\dim(P^{(n-1)})}{2}\rfloor = 2^{n-1}-1$ 
  vertices is a face of the polytope.  To see this, note that
  no set of cardinality less than $2^{n-1}$ can contain all configurations with even or odd parity. We can easily count
  the support sets by counting the non-faces of the corresponding marginal polytope, \ie all sets $\mathcal{Y}$ that
  contain either the configurations with even parity, or the configurations with odd parity. Let $s_{k}$ be the number
  of support sets of cardinality $k$, \ie the number of faces with $k$ vertices. It is given by:
  \begin{equation}
    \label{eq:svector-n-1-model}
    s_{k} = \binom {2^{n}}{k} - 2 \binom{2^{n-1}}{k-2^{n-1}},
  \end{equation}
  where 
  $\binom{m}{l} = 0$ if $l<0$.  Since this polytope has only one affine dependency~\eqref{eq:n-1polytope-dep} which
  includes all the vertices, we see that it is \emph{simplicial}, \ie all its faces are simplices.  It follows that
  $f_{k}$, the
  number of $k$-dimensional faces, 
  is given by $f_{k} = s_{k+1}$.

  Altogether we have determined the face lattice of the polytope, which means that we know the ``combinatorial type'' of
  the polytope.  It turns out that the face lattice of $P^{(n-1)}$ is isomorphic to the face lattice of the
  $(2^{n}-2)$-dimensional \emph{cyclic polytope} with $2^{n}$ vertices.
  %
  %
\end{Exa}
Next, we take a closer look at cyclic polytopes. Define the \emph{moment curve} in $\mathbb{R}^{d}$ by
\begin{equation}
  \label{eq:moment-curve}
  \vec{x} : \mathbb{R} \to \mathbb{R}^{d},\qquad t \mapsto
  \vec{x}(t) \defas \left( t, t^{2}, \cdots, t^{d} \right)^{T}.
\end{equation}
The \emph{$d$-dimensional cyclic polytope with $n$ vertices} is
  \begin{equation}
    C(d,n) \defas \conv \set{\vec{x}(t_{1}),\ldots,\vec{x}(t_{n})},
  \end{equation}
  the convex hull of $n>d$ distinct points ($t_{1} < t_{2} < \ldots < t_{n}$) on the moment curve.  The face lattice of
  a cyclic polytope can easily be described using \emph{Gale's evenness condition}, see \cite{ziegler94}.
  The cyclic polytope is simplicial and neighborly, \ie the convex hull of any $\lfloor\frac{d}{2}\rfloor$ vertices is a
  face of $C(n,d)$, 
  but even better, one has
\begin{Thm}[Upper Bound Theorem]
  \label{thm:upperboundthm}
  If $P$ is a $d$-dimensional polytope with $n=f_{0}$ vertices, then for every $k$ it has at most as many
  $k$-dimensional faces as the cyclic polytope $C(d,n)$:
  \begin{equation}
    \label{eq:upper-bound-eq}
    f_{k}(P) \leq f_{k}(C(d,n)), \quad k=0,\ldots,d.
  \end{equation}
  If equality holds for some $k$ with $\lfloor \frac{d}{2}\rfloor \leq k \leq d$ then $P$ is neighborly.
\end{Thm}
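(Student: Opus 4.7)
The plan is to follow McMullen's classical proof of the Upper Bound Theorem, whose structure I would organize in four stages.

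First, I would reduce to the simplicial case. If $P$ is not simplicial, I perturb its vertices by the ``pulling'' construction: replacing each vertex in turn by a nearby point in general position produces a simplicial polytope $P'$ with the same vertex set (or $n$ vertices) and satisfying $f_k(P) \le f_k(P')$ for all $k$. Thus it suffices to prove the inequality for simplicial $d$-polytopes with $n$ vertices. Note that the cyclic polytope $C(d,n)$ is itself simplicial, so the comparison is consistent.

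Second, I would pass from the $f$-vector to the $h$-vector. For a simplicial $d$-polytope with boundary $\partial P$, define $h_k$ by the relation
\begin{equation*}
\sum_{k=0}^{d} f_{k-1}(P)\,(t-1)^{d-k} = \sum_{k=0}^{d} h_k\, t^{d-k},
\end{equation*}
with the convention $f_{-1}=1$. This is invertible, so bounds on the $h_k$ translate into bounds on the $f_k$ (as non-negative combinations). The key classical facts I would invoke are: (i) the Dehn--Sommerville relations $h_k = h_{d-k}$, which follow from $\partial P$ being an Eulerian simplicial sphere, and (ii) non-negativity $h_k \ge 0$, which follows from shellability of the boundary complex of a simplicial polytope (Bruggesser--Mani).

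Third, I would establish the uniform upper bound
\begin{equation*}
h_k(P) \;\le\; \binom{n-d-1+k}{k}, \qquad 0 \le k \le \lfloor d/2 \rfloor.
\end{equation*}
Using a line-shelling of $\partial P$, one shows that in any shelling step the contribution to $h_k$ counts certain $(k-1)$-subsets of the vertex set of a facet, whence $h_k$ is bounded by the number of $k$-subsets of $\{1,\dots,n\}$ that are ``minimal new faces''; a Kruskal--Katona / shifting argument gives the binomial bound. Combined with Dehn--Sommerville this bounds all $h_k$. Summing against the (non-negative) transformation matrix from $h$ to $f$ yields
\begin{equation*}
f_k(P) \;\le\; \sum_{j} c_{k,j}\, \binom{n-d-1+j}{j},
\end{equation*}
where the $c_{k,j}$ are fixed non-negative integers. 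I would then verify that $C(d,n)$ attains every bound $h_k = \binom{n-d-1+k}{k}$ for $k \le \lfloor d/2\rfloor$; this follows from neighborliness of $C(d,n)$ via Gale's evenness condition, since $f_k(C(d,n)) = \binom{n}{k+1}$ for $k < \lfloor d/2\rfloor$ determines the relevant $h_k$ to coincide with the upper bound. This gives $f_k(P) \le f_k(C(d,n))$ for all $k$.

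Finally, for the equality clause, suppose $f_k(P) = f_k(C(d,n))$ for some $k$ with $\lfloor d/2\rfloor \le k \le d$. Via the $f$-to-$h$ transformation this forces $h_j(P) = \binom{n-d-1+j}{j}$ for $j = 0,\dots,\lfloor d/2\rfloor$, which in turn forces $f_j(P) = \binom{n}{j+1}$ for $j < \lfloor d/2 \rfloor$, i.e.\ every $\lfloor d/2 \rfloor$-subset of vertices spans a face; this is precisely neighborliness. The main obstacle in this plan is the bound on $h_k$: the shelling/Kruskal--Katona argument is the only step that is not purely formal, and it is where the full strength of the simplicial-polytope hypothesis is used.
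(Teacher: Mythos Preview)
The paper does not prove this theorem at all: immediately after stating it, the authors write that the result ``was conjectured by Motzkin in 1957 and its proof has a long and complicated history. The final result is due to McMullen,'' and they simply cite \cite{mcmullen70}. So there is no in-paper proof to compare against; the theorem is quoted from the literature as background for the discussion of cyclic polytopes and exponential families.

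Your proposal is a reasonable outline of McMullen's original argument, which is exactly the reference the paper defers to, so in that sense you are supplying what the paper outsources. Two small comments on accuracy. First, the inequality $h_k \le \binom{n-d-1+k}{k}$ is not really obtained via Kruskal--Katona or shifting; McMullen's proof derives the recursive estimate $h_{k+1} \le \frac{n-d+k}{k+1}\, h_k$ by a double-counting argument over links in a shelling, and the binomial bound follows by induction. Invoking Kruskal--Katona here is misleading. Second, in the equality clause your chain of implications is a bit quick: equality in a single $f_k$ for $k \ge \lfloor d/2\rfloor$ forces equality in the relevant $h_j$ because the transformation matrix has strictly positive entries in the right places, and one should say this explicitly rather than assert it. Neither point is a fatal gap, but both would need to be tightened in a full proof.
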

Theorem \ref{thm:upperboundthm} was conjectured by Motzkin in 1957 and its proof has a long and complicated history. The
final result is due to McMullen \cite{mcmullen70}.

The Upper Bound Theorem shows that the exponential families constructed above
have the largest number of support sets among all exponential families with the same dimension and the same number of
vertices.  Finally, we consider a cyclic polytope of dimension two
which gives an exponential family of smallest dimension containing all the vertices of the probability simplex.  The
construction is due to \cite{matus03:_maxim_of_infor_diver_from_expon_famil}.
\begin{Exa}
  Let $\mathcal{X}=\set{1,\ldots,m}$ and consider the matrix $A$, whose columns are the points on the 2-dimensional
  moment curve, augmented with row $(1,\ldots,1)$:
  \begin{equation}
    \label{eq:moment-matrix}
    A \defas
    \begin{pmatrix}
      1 & 1 & 1 & \ldots & 1 \\
      1 & 2 & 3 & \ldots & m \\
      1 & 4 & 9 & \ldots & m^{2}
    \end{pmatrix}.
  \end{equation}
  This matrix defines a two-dimensional exponential family. To approximate an arbitrary extreme point $\delta_{x}$ of
  the probability simplex, consider the parameter vector $\theta = (x^{2}, -2x, 1)^{T}$, giving rise to probability
  measures $p_{\beta \theta} = \frac{1}{Z}\exp(- \beta \theta^{T} A)$. Since $\theta^{T} a_{y} = (y-x)^{2}$, we get that
  $\lim_{\beta\to\infty} p_{\beta,\theta} = \delta_{x}$.
\end{Exa}
Summarizing we see that cyclic polytopes, owing to their extremal properties, have something to offer not only for
convex geometry, but also for statistics.

\subsection{Parametric description of \texorpdfstring{$\overline{\mathcal{E}_{q,A}}$}{the closure}}
\label{sec:param-descr-overl}
We have seen how the implicit description of an exponential family can be used as a tool to investigate the possible
support sets of an exponential family.  It is also possible to find a parametrization of the exponential family which
extends to the boundary.  However, in general this parametrization will not be injective.

In the following we assume that $A \in \mathbb{R}_{+}^{d\times m}$ has only non-negative real entries.  Since the
row\/span of $A$ contains the constant row $(1,\dots,1)$ it is always possible to replace an arbitrary matrix by a
non-negative matrix without changing the exponential family (see Remark~\ref{Rem:rowspan}).  We now replace the
exponential parametrization in~\eqref{eq:expfamdefmitAmatrix} by a ``monomial'' parametrization
\begin{equation}
  \label{eq:expfamdefmonomial}
  \mathbb{R}_{+}^{d} \ni\xi \mapsto \hat{p}_{\xi}(x) = \frac{q(x)}{Z_{\xi}} \prod_{j=1}^{d}\xi_{j}^{a_{j,x}}
\end{equation}
Then $\hat{p}_{\xi} = p_{\theta}$ if $\xi_{j} = \exp(\theta_{j})$.
Formula~\eqref{eq:expfamdefmonomial} also makes sense when some of the parameters are zero, as long as $Z_{\xi}\neq 0$.
In this case $p_{\xi}$ will describe an element from the closure of the exponential family.

The first question to answer is which parts of the closure can be reached for a given matrix $A$.  Let
\begin{equation}
  \mathcal{\hat E}_{q,A} \defas \{ \hat p_{\xi} : \xi_{j}\ge 0\text{ for all }j\text{, and }Z_{\xi}>0\}
\end{equation}
be the \emph{image of the monomial parametrization}. We want to characterize the support sets of $\mathcal{\hat
  E}_{q,A}$. 
\begin{Def}
  A subset $F\subseteq\mathcal{X}$ is called \emph{$A$-feasible} if for every $x\in\mathcal{X}\setminus F$ the set
  $\supp(a_{x}) = \{j \in \set{1,\dots,d} : a_{j,x}\neq 0\}$ is not contained in $\bigcup_{y\in F}\supp(a_{y})$.
\end{Def}
The following proposition is a mild generalization of \cite[Theorem 3.1]{geigermeeksturmfels06}; the proofs carry over
without difficulty.
\begin{Prop}
  Assume that $A$ has no negative entries.  A probability measure $p\in\overline{\mathcal{E}_{q,A}}$ lies in
  $\mathcal{\hat E}_{q,A}$ if and only if $\supp(p)$ is $A$-feasible.
\end{Prop}
\begin{proof}
  If $\hat p_{\xi}(x) = 0$ then $\xi_{j} = 0$ for some $j\in\supp(a_{x})$.  This implies $a_{j,y}=0$ for all
  $y\in\supp(\hat p_{\xi})$, so $\supp(a_{x})$ is not contained in $\bigcup_{y\in \supp(\hat p_{\xi})} \supp(a_{y})$, showing that
  $\supp(\hat p_{\xi})$ is $A$-feasible.

  For the other direction we may assume that $q(x)=1$ for all $x\in\mathcal{X}$; the general case then follows readily.
  Let $p\in\overline{\mathcal{E}_{q,A}}$.  We need to show that the system of equations
  \begin{equation}
    \prod_{i=1}^{d}\xi_{i}^{a_{i,x}} = p(x)\text{, for all }x\in\mathcal{X},
  \end{equation}
  has a solution.  In the proof of Theorem~\ref{thm:implicitization-theorem} it was shown that the related
  system~\eqref{eq:logprob-equations} has a solution $b$.  Let $\hat\xi_{i} = \exp(b_{i})$ for all $i$.  Note that
  $b_{i}$ (and also $\hat\xi_{i}$) is not restricted by equations~\eqref{eq:logprob-equations} if $i$ is not in
  $\bigcup_{x\in\supp(p)}\supp(a_{x})$.  Put
  \begin{equation}
    \xi_{i} =
    \begin{cases}
      \hat\xi_{i}, & \text{ if } i\in\bigcup_{x\in\supp(p)}\supp(a_{x}), \\
      0,          & \text { else.}
    \end{cases}
  \end{equation}
  Then $\hat{p}_{\xi}(x) = 0$ if and only if $p(x)=0$, by definition of $A$-feasibility, and $\hat{p}_{\xi}(x) = p(x)$
  if $p(x)>0$ by definition of $\hat\xi$.
\end{proof}

In order to parametrize the closure of an exponential family $\mathcal{E}_{q,A}$ with the help of the monomial
parametrization we need to find a matrix $A'$ such that $\mathcal{E}_{q,A} = \mathcal{E}_{q,A'}$ and such that every
possible support set is $A'$-feasible.  In order to define $A'$ we need the following notion from matroid theory:

\begin{Def}
  A \emph{cocircuit vector} of a matrix $A$ is a vector $v$ in the row span of $A$ with inclusion minimal support.
  A~\emph{cocircuit} is the support set of a cocircuit vector.  A cocircuit vector is positive if all of its components
  are non-negative.  A positive cocircuit is the support set of a positive cocircuit vector.
\end{Def}

Note the similarity to the definition of a circuit vector.  In fact, if $A^{*}$ is a matrix the rows of which span $\ker
A$, then the cocircuit vectors of $A$ equal the circuit vectors of $A^{*}$, and vice versa (see
Remark~\ref{sec:duality-remark}).  Consequently, Lemmas~\ref{Lem:circuitinvector} and~\ref{Lem:conformalcomposition}
remain valid if $\ker A$ is replaced by the row span of $A$ and if the word ``circuit'' is replaced by ``cocircuit''.

\begin{Thm}
\label{thm:boundary-param}
Let $\mathcal{E}_{q,A}$ be an exponential family.  
Let $A'$ be a matrix the rows of which contain one positive cocircuit vector for every positive cocircuit of $A$.  Then
$\mathcal{E}_{q,A}=\mathcal{E}_{q,A'}$, and the image of the monomial parametrization of $\mathcal{E}_{q,A'}$ consists
of $\overline{\mathcal{E}_{q,A'}}$.
\end{Thm}
\begin{proof}
  As above we may assume that $A$ has no negative entries.

  For the first statement we need to show that $A$ and $A'$ have the same row space (cf.~Remark~\ref{Rem:rowspan}).  By
  definition every row of $A'$ is a linear combination of rows from $A$.  For the other direction we may use
  Lemma~\ref{Lem:conformalcomposition} by the remark before the theorem.
  By assumption all entries of $A$ are non-negative, so every row of $A$ is a linear combination of positive cocircuit
  vectors.

  For the second statement it is enough to prove that every support set which occurs in $\overline{\mathcal{E}_{q,A}}$
  is $A'$-feasible.  Let $F=\supp(p)$ for some $p\in\overline{\mathcal{E}_{q,A}}$ and fix $x\in\mathcal{X}\setminus F$.
  By Proposition~\ref{prop:facialsets} $F$ is facial, so there exists $c\in\mathbb{R}^{d}$ as in
  Definition~\ref{def:facial}.  The row vector $v = c^{T}A$ is positive and lies in the row span of $A$.  Furthermore
  $v(x)>0$.  By Lemma~\ref{Lem:conformalcomposition} and the remark before the theorem there is a positive cocircuit
  vector $u$ such that $u(x) > 0$ and $\supp(u)\cap F = \emptyset$.  It follows that $\supp(a'_{x})$ is not a subset of
  $\bigcup_{y\in F}\supp(a'_{y})$, where $a'_{y}$ are the columns of $A'$.  Therefore $F$ is $A'$-feasible.
\end{proof}

It is easy to see that the matrix $A'$ is also the smallest matrix satisfying the conclusions of the theorem: Let $v$ be
a positive cocircuit vector.  Using the parametrization induced by $A'$ it is easy to see that
$\overline{\mathcal{E}_{q,A}}=\overline{\mathcal{E}_{q,A'}}$ contains a probability measure $p$ with support
$F:=\supp(p)=\mathcal{X}\setminus\supp(v)$ (set the parameter corresponding to the row of $v$ in $A'$ to zero).  Now
suppose that $p\in\hat{\mathcal{E}}_{q,A}$.  Then $F$ is $A$-feasible.  Fix $x\in\supp(v)$.  Then there is an index $i$
such that $a_{i,x}\neq 0$, but $a_{j,y}=0$ for all $y\in F$.  This means that the support of the $i$th row of $A$ is
contained in the support of $v$.  Since $v$ is a cocircuit vector, it follows that $A$ contains a row which is
proportional to $v$.

\begin{Exa}
  We continue examples~\ref{exa:1Dexpfam} and~\ref{exa:restbasis}.  If $\alpha>0$, then we have
  \begin{equation*}
    A' =
    \left(
    \begin{matrix}
      0 & 1+\alpha & \alpha & \alpha \\
      1+\alpha & 0 &   1    &    1
    \end{matrix}
    \right).
  \end{equation*}

  Quite generally, if the exponential family is onedimensional, then two parameters are enough to parametrize its
  closure.  This fact now follows directly from oriented matroid theory (see section~\ref{sec:OrMat}): A one-dimensional
  polytope has only two facets, therefore the corresponding oriented matroid has exactly two positive cocircuits.
\end{Exa}

\begin{Rem}
  Theorem~\ref{thm:boundary-param} is related to results of Katsabekis and Thoma
  \cite{KatsabekisThoma03:Toric_sets_and_orbits,KatsabekisThoma07:Parametrizations_Toric_Varieties_any_field}, who study
  the image of the monomial parametrization of a toric variety over an arbitrary field.  They show that there is a
  surjective monomial parametrization of any toric variety over an algebraically closed field.  However, there are
  toric varieties over $\mathbb{R}$ which have no surjective monomial parametrization.  For exponential families this
  problem disappears, since we are only concerned with non-negative real numbers.

  Related results were proved in \cite{Rapallo07:parametric_and_binomial_representation}.  Under the assumption that $A$
  has only integer entries it is shown that the monomial parametrization can be improved by replacing $A$ with the
  matrix $A''$ the rows of which consist of a \emph{Hilbert basis}\footnote{See
    \cite{Rapallo07:parametric_and_binomial_representation} for the definition of a Hilbert basis in the setting of
    linear programming.} of the row span of $A$.  However, it is not verified that each possible support set is
  $A''$-feasible.  Of course, this is evident by the previous theorem.

  Using the Hilbert basis has two disadvantages compared to the positive cocircuit vectors proposed here: Hilbert bases
  are only defined for integer $A$, and furthermore it is much more difficult to compute a Hilbert basis than to compute
  the cocircuits of a matrix.
\end{Rem}

\section{Relations to Oriented Matroids}
\label{sec:OrMat}

\newcommand{\omvector}{o.m.\ vector}
\newcommand{\omvectors}{o.m.\ vectors}

In this section the results from the previous section are related to the theory of oriented matroids.  The proofs in
this section are only sketched, since the main results of this work have already been proved directly.  We refer to
chapters 1 to 3 of \cite{BjörnerLasVergnasSturmfelsWhiteZiegler93} for a more detailed introduction to oriented matroids.

\begin{Def}
  Let $E$ be a finite set and $\mathcal{C}$ a non-empty collection of signed subsets of $E$ (see the previous section).
  For every signed set $X = (X^{+},X^{-})$ of $E$ we let $\underline X := X^{+}\cup X^{-}$ denote the \emph{support} of
  $X$.  Furthermore, the \emph{opposite signed set} is $-X = (X^{-}, X^{+})$.  Then the pair $(E,\mathcal{C})$ is called
  an \emph{oriented matroid} if the following conditions are satisfied:
  \begin{itemize}
  \item[\textbf{(C1)}] $\mathcal{C} = -\mathcal{C}$,\hfill (\emph{symmetry})
  \item[\textbf{(C2)}] for all $X,Y\in \mathcal{C}$, if $\underline X \subseteq \underline Y$, then $X=Y$ or
    $X=-Y$,\hfill (\emph{incomparability})
  \item[\textbf{(C3)}] for all $X,Y\in \mathcal{C}$, $X\neq -Y$, and $e\in X^{+}\cap Y^{-}$ there is a $Z\in\mathcal{C}$
    such that $Z^{+}\subseteq (X^{+}\cup Y^{+})\setminus\{e\}$ and $Z^{-}\subseteq(X^{-}\cup
    Y^{-})\setminus\{e\}$.\hfill (\emph{weak elimination})
  \end{itemize}
  In this case each element of~$\mathcal{C}$ is called a \emph{signed circuit}.
\end{Def}

Note that to every oriented matroid $(E,\mathcal{C})$ we have an associated unoriented matroid $(E,C)$, called the
\emph{underlying matroid}, where
\begin{equation}
  C = \left\{ X^{+}\cup X^{-} = \underline X : X\in\mathcal{C} \right\}
\end{equation}
is the set of \emph{circuits} of $(E,C)$.  In this way oriented matroids can be considered as ordinary matroids endowed
with an additional structure, namely a \emph{circuit orientation} which assigns two opposite signed circuits $\pm
X\in\mathcal{C}$ to every circuit $\underline X\in C$.

The most important example of an oriented matroid here is the oriented matroid of a matrix $A
\subseteq\mathbb{R}^{d\times m}$.  In this case let $E = \mathcal{X} = \{1,\dots,m\}$, and let
\begin{equation}
  \mathcal{C} = \bigl\{(\supp(n^{+}), \supp(n^{-})): n\in \ker A\text{ has inclusion minimal support}\bigr\}.
\end{equation}
An oriented matroid is called \emph{realizable} if it is induced by some matrix $A$.\footnote{Note that this definition
  depends, in fact, only on the kernel of $A$, compare Remark \ref{Rem:rowspan}.}


The only axiom which is not trivially fulfilled for this example is \textbf{(C3)}.  However, if we drop the minimality
condition and let $\mathcal{V} = \{(\supp(n^{+}), \supp(n^{-}): n\in \ker A\}$, then it is easy to see that
$\mathcal{V}$ satisfies \textbf{(C3)}.  Thus $(E,\mathcal{C})$ satisfies \textbf{(C3)} by the following proposition:
\begin{Prop}
  Let $\mathcal{V}$ be a nonempty collection of signed subsets of $E$ satisfying \textup{\textbf{(C1)}} and
  \textup{\textbf{(C3)}}.  Write $\Min(\mathcal{V})$ for the minimal elements of $\mathcal{V}$ (with respect to
  inclusion of supports).  Then
  \begin{enumerate}
  \item for any $X\in\mathcal{V}$ there is $Y\in\Min(\mathcal{V})$ such that $Y^{+}\subseteq X^{+}$ and $Y^{-}\subseteq
    X^{-}$.
  \item $\Min(\mathcal{V})$ is the set of circuits of an oriented matroid.
  \end{enumerate}
\end{Prop}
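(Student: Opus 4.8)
The plan is to reduce statement~(2) entirely to the ``conformal'' statement~(1). By definition, (2) asks that $(E,\Min(\mathcal{V}))$ satisfy the circuit axioms \textbf{(C1)}--\textbf{(C3)}. Axiom \textbf{(C1)} is immediate from $\mathcal{V}=-\mathcal{V}$. For \textbf{(C2)}, take $X,Y\in\Min(\mathcal{V})$ with $\underline X\subseteq\underline Y$; minimality of $Y$ forces $\underline X=\underline Y$, and if $X\neq\pm Y$ there is a coordinate $e$ lying in $X^{+}\cap Y^{-}$ (after possibly interchanging $X$ and $Y$), so \textbf{(C3)} for $\mathcal{V}$ yields $Z\in\mathcal{V}$ with $\underline Z\subseteq(\underline X\cup\underline Y)\setminus\{e\}=\underline X\setminus\{e\}$; applying~(1) to $Z$ produces an element of $\Min(\mathcal{V})$ with support strictly smaller than $\underline X$, contradicting $X\in\Min(\mathcal{V})$. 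For \textbf{(C3)}, given $X,Y\in\Min(\mathcal{V})$ with $X\neq -Y$ and $e\in X^{+}\cap Y^{-}$, weak elimination in $\mathcal{V}$ gives some $Z\in\mathcal{V}$ with $Z^{+}\subseteq(X^{+}\cup Y^{+})\setminus\{e\}$ and $Z^{-}\subseteq(X^{-}\cup Y^{-})\setminus\{e\}$; then~(1) applied to $Z$ returns a $Z'\in\Min(\mathcal{V})$ conforming to $Z$ (that is, $(Z')^{+}\subseteq Z^{+}$ and $(Z')^{-}\subseteq Z^{-}$), which is exactly the signed circuit required by \textbf{(C3)}. (I tacitly exclude the empty signed set from $\mathcal{V}$; it plays no role and may be discarded, and this is what makes the elements $Z$ above nonempty so that~(1) applies.)

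So the real content is~(1), and I expect it to be the main obstacle. A single application of weak elimination need not preserve conformity to $X$: if a small element $W$ of $\mathcal{V}$ disagrees in sign with $X$ on several coordinates, cancelling one of them may leave the others. I would therefore first prove the auxiliary statement: \emph{if $X\in\mathcal{V}$ and $W\in\mathcal{V}$ is a nonempty element with $\underline W\subsetneq\underline X$, then there is a nonempty $Z\in\mathcal{V}$ with $Z^{+}\subseteq X^{+}$, $Z^{-}\subseteq X^{-}$, and $\underline Z\subsetneq\underline X$.} The proof is by induction on the number $k$ of coordinates $x\in\underline W$ with $W(x)=-X(x)$. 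If $k=0$ then $W$ itself conforms to $X$ and we are done. If $k\geq 1$, pick such a coordinate $g$ and apply \textbf{(C3)} to the pair $X,W$ at $g$ (using $X$ or $W$ as the first argument according to the sign of $X(g)$; note $X\neq -W$ since $\underline W\subsetneq\underline X$): this gives $Z_{0}\in\mathcal{V}$ with $Z_{0}^{+}\subseteq(X^{+}\cup W^{+})\setminus\{g\}$ and $Z_{0}^{-}\subseteq(X^{-}\cup W^{-})\setminus\{g\}$. A direct check gives $\underline{Z_{0}}\subseteq\underline X\setminus\{g\}\subsetneq\underline X$ and shows that the set of coordinates on which $Z_{0}$ disagrees with $X$ is contained in the corresponding set for $W$ with $g$ removed, hence has size at most $k-1$; the induction hypothesis, applied with $Z_{0}$ in place of $W$, completes the step.

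Granting the auxiliary statement, (1) follows by induction on $|\underline X|$: if $X\in\Min(\mathcal{V})$ take $Y=X$; otherwise some nonempty $W\in\mathcal{V}$ satisfies $\underline W\subsetneq\underline X$, the auxiliary statement produces a nonempty $Z\in\mathcal{V}$ conforming to $X$ with $\underline Z\subsetneq\underline X$, and the induction hypothesis applied to $Z$ gives $Y\in\Min(\mathcal{V})$ with $Y^{+}\subseteq Z^{+}\subseteq X^{+}$ and $Y^{-}\subseteq Z^{-}\subseteq X^{-}$. The only fiddly parts are the sign bookkeeping in choosing the first argument of \textbf{(C3)} and the verification that the disagreement count strictly decreases, both of which are routine; the genuine conceptual point is that one must peel off the sign disagreements one coordinate at a time rather than hope for conformity in a single elimination. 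This is precisely the mechanism underlying the proof of Lemma~\ref{Lem:circuitinvector} in the realizable case.
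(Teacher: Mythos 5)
The paper does not actually prove this proposition; it simply cites Proposition 3.2.4 of Bj\"orner, Las Vergnas, Sturmfels, White and Ziegler.  Your argument is a full, self-contained proof, and it is the standard one for this result: reduce (2) to (1) via weak elimination plus conformal descent, and prove (1) by peeling off sign disagreements one coordinate at a time.  The reductions of (C1), (C2), (C3) for $\Min(\mathcal{V})$ to (1) are correct, and the induction on the disagreement count $k$ in the auxiliary statement is verified accurately (in particular, $\underline{Z_0}\subseteq\underline X\setminus\{g\}$ and the strict decrease of the disagreement set both check out).

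The one point worth flagging is your parenthetical remark that the empty signed set ``plays no role and may be discarded.''  This is not automatic: if $\emptyset\in\mathcal{V}$, removing it can in principle destroy \textbf{(C3)}, because the weak elimination axiom only guarantees \emph{some} $Z$ and that $Z$ could be $\emptyset$; axioms \textbf{(C1)} and \textbf{(C3)} alone do not force the existence of a nonempty eliminant.  (Indeed, one can cook up small non-realizable $\mathcal{V}$'s containing $\emptyset$ satisfying \textbf{(C1)} and \textbf{(C3)} for which the minimal nonempty elements fail \textbf{(C2)}.)  In the realizable situation the paper cares about — where $\mathcal{V}=\sgn(\ker A)$ — the eliminant $\sgn(v(g)u - u(g)v)$ is nonzero whenever $\sgn(u)\neq-\sgn(v)$, so the gap is harmless there; and if one reads $\Min(\mathcal{V})$ literally so that $\Min(\mathcal{V})=\{\emptyset\}$ when $\emptyset\in\mathcal{V}$, then both claims are vacuous.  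So your proof is adequate for the paper's purposes, but a fully general version needs either to assume $\emptyset\notin\mathcal{V}$ outright or to invoke the stronger vector elimination axiom \textbf{(V3)} (which forbids dropping all non-conflicting coordinates), which is what the cited reference actually works with.
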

\begin{proof}
  \cite{BjörnerLasVergnasSturmfelsWhiteZiegler93}, Proposition 3.2.4.
\end{proof}

This illustrates how \textbf{(C2)} corresponds to the minimality condition.  It is possible to define oriented matroids
without this minimality condition using the following construction:

\begin{Def}
  The \emph{composition} of two signed subsets $X,Y$ of $E$ is the signed subset $X\circ Y$ with
  \begin{equation}
    (X\circ Y)^{+} := X^{+} \cup (Y^{+}\setminus X^{-}),
    \qquad
    (X\circ Y)^{-} := X^{-} \cup (Y^{-}\setminus X^{+}).
  \end{equation}
  A composition $X\circ Y$ is \emph{conformal} if $X$ and $Y$ are \emph{sign-consistent}, i.e.~$X^{+}\cap
  Y^{-}=\emptyset = X^{-}\cap Y^{+}$.

  An \emph{\omvector}{} of an oriented matroid is any composition of an arbitrary number of circuits.\footnote{In
    \cite{BjörnerLasVergnasSturmfelsWhiteZiegler93}, \omvectors{} are simply called vectors.  The name ``\omvector'' has
    been proposed by F.~Mat\'u\v{s} to avoid confusion.}  The set of \omvectors{} shall be denoted by $\mathcal{V}$.  If
  the oriented matroid comes from a matrix $A$, then $\mathcal{V}$ equals the set $\mathcal{V}$ from above.
\end{Def}
Note that composition is associative but not commutative in general.

The above proposition shows that an oriented matroid can alternatively be defined as a pair $(E,\mathcal{V})$, where
$\mathcal{V}$ is a collection of signed subsets satisfying \textbf{(C1)}, \textbf{(C3)} and
\begin{itemize}
\item[\textbf{(V0)}] $\emptyset\in\mathcal{V}$,
\item[\textbf{(V2)}]
  for all $X,Y\in\mathcal{V}$ we have $X\circ Y\in\mathcal{V}$.
\end{itemize}

Note that in the realizable case linear combinations of vectors correspond to composition of their sign vectors in the
following sense:
\begin{equation}
  \lim_{\epsilon\to 0, \epsilon>0}\sgn(n + \epsilon n') = \sgn(n)\circ \sgn(n').
\end{equation}
Now Lemmas \ref{Lem:circuitinvector} and \ref{Lem:conformalcomposition} correspond to the following two lemmas:
\theoremstyle{plain}
\newtheorem*{Lem1p}{Lemma \ref{Lem:circuitinvector}'}
\begin{Lem1p}
  For every \omvector{} $Y$ there exists a sign-consistent signed circuit $X$ such that $\underline X\subseteq\underline
  Y$.
\end{Lem1p}
\newtheorem*{Lem2p}{Lemma \ref{Lem:conformalcomposition}'}
\begin{Lem2p}
  Any \omvector{} is a conformal composition of circuits.
\end{Lem2p}

To every matrix $A$ we can associate a polytope which was called convex support in the last section.  Many properties of
this polytope can be translated into the language of oriented matroids.  This yields constructions which also make
sense, if the oriented matroid is not realizable.  In order to make this more precise, we need the notion of the dual
oriented matroid.  The general construction of the dual of an oriented matroid is beyond the scope of this work.  Here,
we only state the definition for realizable oriented matroids.

In the following we assume that the matrix $A$ has the constant vector $(1,\dots,1)$ in its rowspace.  This means
that all the column vectors $a_{x}$ lie in a hyperplane $l_{1}=1$.  In the general case, this can always be achieved by
adding another dimension.  Technically we require that the face lattice of the polytope spanned by the columns of $A$ is
combinatorially equivalent to the face lattice of the cone over the columns.  See also the remarks before Definition
\ref{def:facial}.

For every dual vector $l\in(\mathbb{R}^{d})^{*}$ let $N^{+}_{l} := \{ x\in\mathcal{X} : l(a_{x}) > 0 \}$ and $N^{-}_{l} := \{
x\in\mathcal{X} : l(a_{x}) < 0 \}$.  This way we can associate a signed subset $\sgn^{*}(l) := (N^{+}_{l}, N^{-}_{l})$
with $l$.  The signed subset $\sgn^{*}(l)$ is called a \emph{covector}.  Let $\mathcal{L}$ be the set of all
covectors.  If the signed subset $(N^{+}_{l}, N^{-}_{l})$ has minimal support (i.e.~``many'' vectors $a_{x}$ lie on the
hyperplane $l=0$), then $l$ is called a \emph{cocircuit vector}, and $\sgn^{*}(l)$ is called a \emph{signed cocircuit}.
The collection of all signed cocircuits shall be denoted by $\mathcal{C}^{*}$.

\begin{Lem}
  Let $(E,\mathcal{C})$ be an oriented matroid induced by a matrix $A$.  Then $(E,\mathcal{C}^{*})$ is an oriented
  matroid, called the \emph{dual oriented matroid}.
\end{Lem}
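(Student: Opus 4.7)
The plan is to verify the three axioms \textbf{(C1)}, \textbf{(C2)}, \textbf{(C3)} for $\mathcal{C}^{*}$ by first working at the level of all covectors $\mathcal{L}$, establishing \textbf{(C1)} and \textbf{(C3)} there, and then passing to the minimal elements via the proposition already stated above.

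First I would observe that $\sgn^{*}(-l) = -\sgn^{*}(l)$, so $\mathcal{L} = -\mathcal{L}$ and in particular $\mathcal{C}^{*} = -\mathcal{C}^{*}$, which is axiom \textbf{(C1)}. Axiom \textbf{(C2)} for $\mathcal{C}^{*}$ is built into the definition, since cocircuits are by construction those covectors with inclusion-minimal support; the only thing one actually needs is that two cocircuit vectors $l,l'$ with the same underlying support must be proportional. This follows from realizability: if $l$ and $l'$ vanish on the same subset $\{a_{x} : l(a_{x})=0\}$, this set spans a hyperplane in the column span of $A$ (by minimality of the cocircuit support), and any two linear functionals vanishing on a common hyperplane are scalar multiples of each other, so $\sgn^{*}(l) = \pm\sgn^{*}(l')$.

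The main content is the weak elimination axiom \textbf{(C3)}, which I would first prove for all of $\mathcal{L}$. Given covectors $X=\sgn^{*}(l)$ and $Y=\sgn^{*}(l')$ with $X\neq -Y$ and $e\in X^{+}\cap Y^{-}$, so that $l(a_{e})>0$ and $l'(a_{e})<0$, I set
\begin{equation*}
  l'' \defas |l'(a_{e})|\cdot l \,+\, l(a_{e})\cdot l',
\end{equation*}
which is a strictly positive combination that satisfies $l''(a_{e})=0$. For any other $x$, $l''(a_{x})$ is a non-negative combination of $l(a_{x})$ and $l'(a_{x})$, so $l''(a_{x})>0$ forces at least one of $l(a_{x}),l'(a_{x})$ to be positive, giving $x\in (N^{+}_{l}\cup N^{+}_{l'})\setminus\{e\}$; the symmetric argument works for $l''(a_{x})<0$. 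Thus $\sgn^{*}(l'')\in\mathcal{L}$ verifies the elimination property for $X,Y$ at the level of covectors.

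Finally, I would invoke the earlier proposition: since $\mathcal{L}$ satisfies \textbf{(C1)} and \textbf{(C3)}, its inclusion-minimal elements $\Min(\mathcal{L})$ form the signed circuits of an oriented matroid. By definition $\Min(\mathcal{L})=\mathcal{C}^{*}$, so $(E,\mathcal{C}^{*})$ is an oriented matroid. The only mildly subtle step is the argument above for \textbf{(C2)} (the uniqueness of the cocircuit vector up to scalar), but this is precisely where realizability is used, and it is the natural dual counterpart of the minimality used to define oriented circuits in the previous section.
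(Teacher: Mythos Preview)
Your argument is correct. The paper does not actually supply a proof here; it simply refers the reader to Section~3.4 of \cite{BjörnerLasVergnasSturmfelsWhiteZiegler93}. What you wrote is precisely the standard verification that the referenced section carries out, and it dovetails with the proposition quoted just above the lemma: you check \textbf{(C1)} and \textbf{(C3)} for the full covector set $\mathcal{L}$ via the obvious linear-combination construction, then invoke $\Min(\mathcal{L})=\mathcal{C}^{*}$ to conclude. Your separate treatment of \textbf{(C2)} (showing that two cocircuit vectors with the same zero set restrict to proportional functionals on the column span of $A$) is strictly speaking redundant once you invoke the proposition, but it is correct and it makes explicit why $\Min(\mathcal{L})$ really coincides with the set $\mathcal{C}^{*}$ as defined in the paper. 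So there is no gap; you have simply filled in what the paper delegates to the textbook.
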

\begin{proof}
  See section 3.4 of \cite{BjörnerLasVergnasSturmfelsWhiteZiegler93}.
\end{proof}

Note that the faces of the polytope correspond to hyperplanes such that all vertices lie on one side of this hyperplane,
compare Definition \ref{def:facial}.  Thus the faces of the polytope are in a one-to-one relation with the positive
covectors, i.e.~the covectors $X = (X^{+},X^{-})$ such that $X^{-}=\emptyset$.  The face lattice of the polytope can be
reconstructed by partially ordering the positive covectors by inclusion of their supports; however, the relation needs
to be inverted: Covectors with small support correspond to faces which contain many vertices.  The empty face (which is
induced, for example, by the dual vector $l_{1}$ which defines the hyperplane containing all $a_{x}$) corresponds to the
covector $T := (\mathcal{X},\emptyset)$.  This correspondence of faces and positive covectors shows that the
parameterization of Theorem~\ref{thm:boundary-param} is, in fact, related to the face structure of the convex support.

We can apply these remarks to all abstract oriented matroids such that $T = (\mathcal{X},\emptyset)$ is a covector.
Such an oriented matroid is usually called \emph{acyclic}.  Thus a face of an acyclic oriented matroid is any positive
covector.  A \emph{vertex} is a maximal positive covector $X$ in $\mathcal{L}\setminus\{T\}$, i.e.~if $\underline{X}
\subset \underline{Y}$ for some positive covector $Y\in\mathcal{L}\setminus\{X\}$, then $Y = T$.


In this setting we have the following result, which clearly corresponds to the second statement of
Lemma~\ref{lem:polytope-lemma}:
\begin{Prop}[Las Vergnas]
  Let $(E,\mathcal{C})$ be an acyclic oriented matroid.  For any subset $F\subseteq E$ the following are equivalent:
  \begin{itemize}
  \item $F$ is a face of the oriented matroid.
  \item For every signed circuit $X\in\mathcal{C}$, if $X^{+}\subseteq F$ then $X^{-}\subseteq F$.
  \end{itemize}
\end{Prop}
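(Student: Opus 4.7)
The proof plan centers on the orthogonality between signed circuits and covectors, together with the translation of the notion of ``face'' into oriented matroid language. In the realizable setting, $F$ is a face (facial, in the sense of Definition \ref{def:facial}) precisely when $X := (E\setminus F, \emptyset)$ is a positive covector: a dual vector $c$ with $c^{T}a_{y}=0$ for $y\in F$ and $c^{T}a_{z}>0$ for $z\notin F$ realizes $\sgn^{*}(c)=X$, and conversely. In the abstract setting, we \emph{define} $F$ to be a face iff $(E\setminus F,\emptyset)\in\mathcal{L}$, consistently with the preceding discussion.

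The key tool is the duality between vectors and covectors of an oriented matroid: a signed circuit $Y$ and a covector $X$ are always \emph{orthogonal} in the sense that
\begin{equation*}
(Y^{+}\cap X^{+})\cup(Y^{-}\cap X^{-})\neq\emptyset \quad\Longleftrightarrow\quad (Y^{+}\cap X^{-})\cup(Y^{-}\cap X^{+})\neq\emptyset,
\end{equation*}
and conversely, the covectors are precisely those signed subsets of $E$ that are orthogonal (in this sense) to every signed circuit. In the realizable case the forward direction is immediate from $l(An)=\sum_{x}n(x)\,l(a_{x})=0$ for $l\in(\mathbb{R}^{d})^{*}$ and $n\in\ker A$; the converse is the nontrivial biduality of oriented matroids, which I would simply cite from Chapter~3 of \cite{BjörnerLasVergnasSturmfelsWhiteZiegler93}.

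With this in hand, the proof is essentially a one-line computation in each direction. For ``face $\Rightarrow$ condition'', take the positive covector $X=(E\setminus F,\emptyset)$ associated with $F$ and let $Y\in\mathcal{C}$ satisfy $Y^{+}\subseteq F$. Then $Y^{+}\cap X^{+}=\emptyset$ and $Y^{-}\cap X^{-}=\emptyset$, so orthogonality forces $Y^{-}\cap X^{+}=\emptyset$, i.e.\ $Y^{-}\subseteq F$. For ``condition $\Rightarrow$ face'', observe that because $\mathcal{C}=-\mathcal{C}$ by axiom \textbf{(C1)}, the hypothesis ``$Y^{+}\subseteq F\Rightarrow Y^{-}\subseteq F$'' is actually symmetric in $Y^{+}$ and $Y^{-}$, hence equivalent to ``$Y^{+}\subseteq F\Leftrightarrow Y^{-}\subseteq F$'' for all $Y\in\mathcal{C}$. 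Rewriting in terms of the candidate signed set $X=(E\setminus F,\emptyset)$, this reads exactly as orthogonality of $X$ with every signed circuit $Y$. By the biduality theorem $X\in\mathcal{L}$, so $F$ is a face.

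The main obstacle is not combinatorial but referential: the converse direction rests on the nontrivial fact that covectors are characterized as signed sets orthogonal to all circuits. This replaces the use of Farkas' Lemma that appeared in the realizable argument of Lemma~\ref{lem:polytope-lemma}, and in the abstract setting one has no choice but to invoke the oriented matroid duality theorem. Everything else is a direct translation of the sign bookkeeping already carried out for the realizable case.
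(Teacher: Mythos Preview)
Your argument is correct. The forward direction is exactly the orthogonality computation you give, and for the converse your reduction to ``$X=(E\setminus F,\emptyset)$ is orthogonal to every signed circuit'' is clean and accurate; the characterization of covectors as $\mathcal{C}^{\perp}$ is the oriented-matroid Farkas/biduality statement in \cite{BjörnerLasVergnasSturmfelsWhiteZiegler93}, Chapter~3, so your citation is appropriate. One small point worth making explicit: orthogonality to all \emph{circuits} suffices because every \omvector{} is a conformal composition of circuits (Lemma~\ref{Lem:conformalcomposition}'), and orthogonality is preserved under conformal composition; this closes the gap between ``$X\perp Y$ for all $Y\in\mathcal{C}$'' and ``$X\in\mathcal{L}$''.

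As for comparison: the paper does not actually give a proof here but simply defers to Proposition~9.1.2 of \cite{BjörnerLasVergnasSturmfelsWhiteZiegler93}, remarking that an extra hypothesis there is unnecessary. Your write-up supplies the mechanism behind that reference---circuit/covector orthogonality together with biduality---and thereby makes visible the exact analogue of the Farkas step in Lemma~\ref{lem:polytope-lemma}. So your route is not so much different as more explicit: it unpacks what the cited proof does, at the cost of invoking the biduality theorem rather than a single black-box proposition.
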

\begin{proof}
  The proof of Proposition 9.1.2 in \cite{BjörnerLasVergnasSturmfelsWhiteZiegler93} applies (note that the statement of
  Proposition 9.1.2 includes an additional assumption which is never used in the proof).
\end{proof}
With the help of the moment map defined in the previous section, this proposition can be used to easily derive Theorem
\ref{test}: By the properties of the moment map, every face of the convex support corresponds to a possible support set
of an exponential family, and the proposition links this to the signed circuits of the corresponding oriented matroid.


Finally, Corollary \ref{Cor:OrMatdeterminessupports} can be rewritten as
\newtheorem*{Cor1p}{Corollary \ref{Cor:OrMatdeterminessupports}'}
\begin{Cor1p}
  The possible support sets of two exponential families coincide if they have the same oriented matroids.
\end{Cor1p}
Unfortunately, this correspondence is not one-to-one: Different oriented matroids can yield the same face lattice,
i.e.~combinatorially equivalent polytopes. A simple example is given by a regular and a non-regular octahedron as
described in \cite{ziegler94}. The special case has a name: an oriented matroid is \emph{rigid}, if its positive
covectors (\ie its face lattice) determine all covectors (\ie the whole oriented matroid). Still, Corollary
{\ref{Cor:OrMatdeterminessupports}'} implies that the instruments of the theory of oriented matroids should suffice to
describe the support sets of an exponential family.

\begin{Rem}[Importance of Duality]
  \label{sec:duality-remark}
  There are mainly two reasons why the theory of oriented matroids (as well as the theory of ordinary matroids) is
  considered important.  First, it yields an abstract framework which allows to describe a multitude of different
  combinatorial questions in a unified manner.  This, of course, does not in itself lead to any new theorem.  The
  second reason is that the theory provides the important tool of matroid duality.

  It turns out that the dual of a realizable matroid is again realizable: If $A$ is a matrix representing an
  oriented matroid $(E,\mathcal{C})$, then any matrix $A^{*}$ such that the rows of $A^{*}$ span the orthogonal
  complement of the row span of $A$ represents the oriented matroid $(E,\mathcal{C}^{*})$.

  To motivate the importance of this construction we sketch its implications for the case that the oriented matroid
  comes from a polytope.  In this case the duality is known under the name \emph{Gale transform} \cite[Chapter
  6]{ziegler94}.  A $d$-dimensional polytope with $N$ vertices can be represented by $N$ vectors in $\mathbb{R}^{d+1}$
  lying in a hyperplane.  These vectors form a $(d+1)\times N$-matrix $A$.  Now we can find an $(N-d-1)\times N$-matrix
  $A^{*}$ as above, so the dual matroid is represented by a configuration of $N$ vectors in $\mathbb{R}^{N-d-1}$.  This
  means that this construction allows us to obtain a lowdimensional image of a highdimensional polytope, as long as the
  number of vertices is not much larger than the dimension.  This method has been used for example in
  \cite{sturmfels88:_some_gale} in order to construct polytopes with quite unintuitive properties, leading to the
  rejection of some conjectures.  Furthermore, oriented matroid duality makes it possible to classify polytopes with
  ``few vertices'' by classifying vector configurations.

  The notion of dimension generalizes to arbitrary oriented matroids (and ordinary matroids).  In the general setting
  one usually talks about the \emph{rank} of a matroid, which is defined as the maximal cardinality of a subset
  $E\subseteq F$ such that $E$ contains no support of a signed circuit.  In this sense duality exchanges examples of
  high rank and low rank, where ``high'' and ``low'' is relative to $|E|$.
\end{Rem}

\begin{Rem}[Computing with Matroids]
  In Section~\ref{sec:expfamsec} we already recommended \textrm{4ti2} as a tool to compute the circuit vectors of a
  matrix.  Alternatively, \textrm{TOPCOM}~\cite{TOPCOM} is a software package which allows to do many common
  computations with oriented matroids.  Usually, the first step of a calculation is the extraction of the circuits from
  a matrix.  Both programs can only work with integer matrices.  Note that it is difficult to treat ``arbitrary'' (real)
  matrices on a computer.  Rounding the matrix entries to a floating number produces essentially rational matrices, and
  rational matrices can be turned into integer matrices by multiplying the matrix with the least common multiple of the
  denominators of all its entries.  However, even small changes to the matrix can change the oriented matroid
  (generically, $d$ vectors in $\mathbb{R}^{d}$ will be independent after adding small rounding errors).  In principle,
  the algorithms mentioned in Remark~\ref{rem:algstat} can also be used with floating point entries, if a robust
  criterion is available for checking when a floating number appearing in this algorithm numerically vanishes.
\end{Rem}



\subsection*{Acknowledgement}

The authors want to thank Fero Mat\'u\v{s} and Bastian Steudel for many discussions.  Nihat Ay has
been supported by the Santa Fe Institute.  Thomas Kahle has been working on this article during his
time at the Max-Planck-Institute for Mathematics in Leipzig, supported by the Volkswagen Foundation.

This paper is an expanded version of a contribution to WUPES'09, which did not include the material
of Section~\ref{sec:param-descr-overl}.

\bibliographystyle{amsalpha}

\bibliography{ras}

\end{document}